\newtheorem{thm}{Theorem}[section]
\newtheorem{lemma}{Lemma}
\theoremstyle{definition}
\newtheorem{definition}[thm]{Definition}
\newcommand{\R}{\mathbb{R}}
\newcommand{\vx}{\mathbf{x}}
\newcommand{\vu}{\mathbf{u}}
\newcommand{\vv}{\mathbf{v}}
\newcommand{\abs}[1]{\left|#1\right|}
\newcommand{\norm}[1]{\abs{\abs{#1}}}
\newcommand{\inprod}[1]{\langle #1\rangle}
\newcommand{\be}{\begin{equation}}  
\newcommand{\ee}{\end{equation}}
\begin{document}

\title{Local Trajectory Variation Exponent (LTVE) for \\ Visualizing Dynamical Systems}

\author{
Yunchen Tsai\thanks{Department of Mathematics, the Hong Kong University of Science and Technology, Clear Water Bay, Hong Kong. Email: {\bf yctsai@connect.ust.hk}}
\and
Shingyu Leung\thanks{Department of Mathematics, the Hong Kong University of Science and Technology, Clear Water Bay, Hong Kong. Email: {\bf masyleung@ust.hk}}
}

%\date{\today}
\date{}

\maketitle

\begin{abstract}
The identification and visualization of Lagrangian structures in flows plays a crucial role in the study of dynamic systems and fluid dynamics. The Finite Time Lyapunov Exponent (FTLE) has been widely used for this purpose; however, it only approximates the flow by considering the positions of particles at the initial and final times, ignoring the actual trajectory of the particle. To overcome this limitation, we propose a novel quantity that extends and generalizes the FTLE by incorporating trajectory metrics as a measure of similarity between trajectories. Our proposed method utilizes trajectory metrics to quantify the distance between trajectories, providing a more robust and accurate measure of the LCS. By incorporating trajectory metrics, we can capture the actual path of the particle and account for its behavior over time, resulting in a more comprehensive analysis of the flow. Our approach extends the traditional FTLE approach to include trajectory metrics as a means of capturing the complexity of the flow.
\end{abstract}

\section{Introduction}
\label{Sec:Introduction}

The study of coherent structures has been a fundamental area of research in dynamic systems and computational fluid dynamics. Among the various types of coherent structures, Lagrangian coherent structures (LCS) are particularly noteworthy. LCS focuses on identifying regions within a flow that display strong attraction and repulsion of particles over a specific time span. It serves as a valuable tool for analyzing, visualizing, and extracting information from complex dynamical systems. The primary objective is to delineate surfaces of trajectories and separate regions in the phase space that exert significant influence on neighboring trajectories during a defined time interval \cite{hal15}. LCS has found applications in diverse fields, including oceanography \cite{lekleo04,shalekmar05}, meteorology \cite{saphal09}, flight mechanics \cite{carmoh08,tanchahal10}, gravity waves \cite{tanpea10}, and bio-inspired fluid flows \cite{lipmoh09,grerowsmi10,lukyanfau10}. Although the underlying physical intuition behind LCS is straightforward, an accurate and formal mathematical definition capturing this behavior is still a subject of ongoing development. Numerous studies have attempted to establish robust frameworks for identifying LCS structures \cite{hal11,shalekmar05}, but these frameworks often face computational challenges or suffer from certain limitations that affect their precision.

The finite-time Lyapunov exponent (FTLE) is a widely used measure for locating Lagrangian coherent structures (LCS) \cite{halyua00,hal01,hal01b,shalekmar05,lekshamar07}. It quantifies the rate of change in distance between neighboring particles over a finite time interval, considering an infinitesimal perturbation in the initial position. Computing the FTLE field involves several steps. Firstly, the flow map is computed, which establishes the connection between the initial and arrival positions of particles along the characteristic line. The FTLE is then defined based on the largest eigenvalue of the deformation matrix derived from the Jacobian of the flow map. Various Eulerian approaches have been developed to numerically compute the FTLE on a fixed Cartesian mesh in recent studies . For more in-depth discussions on this topic, interested readers can refer to \cite{leu11,leu13,youwonleu17,youleu18,youleu18b,lywn19,ngleu19,youleu20,youleu21,youleu21b} and related references.

Another approach to analyzing trajectories involves trajectory analysis, which aims to develop techniques for understanding and classifying trajectory characteristics. While trajectory clustering is not a new topic, previous methods primarily focused on visualizing or constructing dictionaries and summaries to reveal hidden patterns or predict future routes \cite{leehanwha07,vlagunkol02,gafsmy99,grscg07,wycm11,fkss13,yzzhb17}. These approaches were not specifically designed to discover coherent structures within underlying flows and dynamical systems. However, recent works have explored clustering-based approaches for identifying LCS. Some of these methods involve constructing networks of trajectories and determining edge weights based on trajectory similarity \cite{SchlueterKuck2016CoherentSC,Mowlavi_2022,Rosi2015LagrangianCS}. In \cite{youleu14}, a coherent ergodic partition method was developed to separate trajectories into clusters by integrating and computing long-time averages of functions along the trajectories. This effectively projects high-dimensional data onto a low-dimensional manifold, allowing for partitioning based on these function averages. In \cite{chaleu23}, a clustering-based approach was proposed to identify coherent flow structures in continuous dynamical systems. This method treats particle trajectories over a finite time interval as high-dimensional data points and clusters them from different initial locations into groups using normalized standard deviation or mean absolute deviation to quantify deformation.

The aim of this paper is to propose a general framework based on the trajectory analysis technique to the study of LCS. Specifically, we seek to develop a new metric for efficiently extracting and visualizing LCS using trajectory analysis. Instead of clustering all provided trajectories into groups, we investigate the variation of the trajectories in a small neighborhood of each trajectory. We define a local quantity that measures the similarity of these trajectories from a small local neighborhood based on various possible trajectory metrics. This metric is a commonly used tool for quantifying the similarity between trajectories, and its metric properties may prove useful in developing the theoretical framework \cite{10.1007/s00778-019-00574-9,10.1145/2782759.2782767,DBLP:journals/corr/abs-2004-00722}. Our proposed metric considers the effect of a small perturbation on the entire trajectory over a fixed time period, rather than just the final position, which we believe provides a more robust method for extracting LCS over an extended period. 

This paper is organized as follows: Section \ref{Sec:Preliminaries} presents the related works and background, while Section \ref{Sec:LTV} details the model and algorithm for our proposed quantity. Section \ref{SubSec:Metric} discusses several possible choices of the trajectory metric. Finally, Section \ref{Sec:NumericalExamples} provides several numerical results.

%%%%%%%%%%%%%%%%%%%

\section{Background}
\label{Sec:Preliminaries}

In this section, we provide several definitions and assumptions on our problem that is crucial for the discussion in the later section. 

\subsection{Trajectory Metric}
Consider a closed and bounded domain $\Omega\subset\mathbb{R}^{n}$, a finite time domain $[t_{0},t_{0}+T]$ and a Lipchitz velocity field $\vv:\Omega\times\mathbb{R}\mapsto\mathbb{R}^{n}$. The movement of any particle in $\Omega$ can be described by the following ordinary differential equation (ODE),
\begin{eqnarray}
\dot{\vx}(t) &=& \vv(\vx(t),t)\quad \forall\vx\in\Omega, t\in [t_{0},t_{0}+T]  \label{Eqn:ODE}
\end{eqnarray}
for any initial condition $\vx(t_0)=\vx_{0}\in\Omega$.

\begin{definition}[Trajectory of Particle]\thlabel{def_traj}
Given $\vx_{0}\in\mathbb{R}^{n}$, a trajectory $\vx:\mathbb{R}\mapsto\Omega$ is defined as the solution of the ODE in (\ref{Eqn:ODE}) with initial condition of $\vx(t_{0})=\vx_{0}$.
\end{definition}

Instead of simply measuring the distance between two trajectories, we are more interested in studying the intrinsic difference in the motion characteristics. One idea is to consider only the so-called displacement trajectory, i.e. the relative displacement from the initial location at the initial time $t=t_0$. 

\begin{definition}[Displacement Trajectory of Particle]
Given $\vx_{0}\in\Omega$ with the trajectory $\vx:\mathbb{R}\mapsto\Omega$ as defined in \thref{def_traj}, a displacement trajectory $\Psi_{\vx_{0}}:\mathbb{R}\mapsto \mathbb{R}^{n}$ is defined as $\Psi_{\vx_{0}}(t):=\vx(t)-\vx(t_{0})$.
\end{definition}

Since we work with a discretized time domain, we consider the following discrete displacement trajectory.

\begin{definition}[Discrete Trajectory]\thlabel{def_discrete}
Given a trajectory $\Psi$, then a $k$-discrete trajectory is defined as $\Psi_{\Delta}:[0,k-1]\cap\mathbb{N}\mapsto \Omega$ such that $\Psi_{\Delta}(j)=\Psi\left(t_{0}+{jT}/{(k-1)}\right)$ for $j=0,1,\cdots,k-1$. 
\end{definition}

We can map any $k$-discrete trajectory into a unique vector in the space of $\mathbb{R}^{kn}$ by 
$$
L(\Psi_{\Delta})=(\pi_{e_{1}}(\Psi_{\Delta}(0)),\pi_{e_{2}}(\Psi_{\Delta}(0)),...,\pi_{e_{d}}(\Psi_{\Delta}(k-1)))
$$ 
where $\pi_{e_{j}}(\vv)$ is the projection map that projects $\vv$ to $e_{j}$ with $e_{j}$ denotes the $j$-th standard basis in $\mathbb{R}^{n}$. One can quickly verify that the map $L$ is linear and bijective. Thus the space of all $k$-discrete trajectory is isomorphic to the space $\mathbb{R}^{kn}$. For the simplicity of presentation, we will abuse the notation of $\Psi_{\Delta}$ for the rest of the article to represent both the mapping $[0,k-1]\cap\mathbb{N}\mapsto \Omega$ and also the vector $L(\Psi_{\Delta})$ whenever it is clear from the context.

To study the similarity between two displacement trajectories, we follow a common approach to define a trajectory metric. Such a method is standard in trajectory analysis, especially in classifying and clustering a set of trajectories \cite{10.1007/s00778-019-00574-9}. The definition is straightforward. Denoting $\Phi$ as the space of all trajectories, we obtain a trajectory metric $d:\Phi\times\Phi\mapsto\mathbb{R}$ such that $(\Phi,d)$ forms a metric space. 

\begin{definition}[Trajectory Metric]
    $\Phi$ is the space of $\mathbf{C}^{1}$ function $\mathbb{R}\mapsto\Omega$, then $d:\Phi\times\Phi\mapsto\mathbb{R}$ is a trajectory metric $\iff$ it satisfies
    \begin{itemize}
        \item $d(\Psi_{1},\Psi_{2})=0\iff  \Psi_{1}=\Psi_{2}\quad\forall\Psi_{1},\Psi_{2}\in\Phi$
        \item $d(\Psi_{1},\Psi_{2})=d(\Psi_{2},\Psi_{1})\quad\forall\Psi_{1},\Psi_{2}\in\Phi$
        \item $d(\Psi_{1},\Psi_{3})\leq d(\Psi_{1},\Psi_{2})+d(\Psi_{2},\Psi_{3})\quad\forall\Psi_{1},\Psi_{2},\Psi_{3}\in\Phi$
    \end{itemize}
\end{definition}

\subsection{Finite-time Lyapunov Exponent (FTLE) and Lagrangian Coherent Structure (LCS)}
\label{SubSec:FTLE}

We define the \textit{flow map} $f:\Omega \rightarrow \Omega$, which collects the solutions to ODEs (\ref{Eqn:ODE}) for all initial conditions $\vx(t_0)=\vx_0\in\Omega$ at all times $t\in\mathbb{R}$. The flow map represents the arrival location $\vx(t_0+T)$ at $t=t_0+T$ of the particle trajectory with the initial condition $\vx(a)=\vx_0$ at the initial time $t=t_0$. In other words, this mapping takes a point from $\vx(t_0)$ at $t=t_0$ to another point $\vx(t_0+T)$ at $t=t_0+T$.

An important quantity in studying the separation rate between adjacent particles with infinitesimal perturbations in their initial locations over a finite-time interval is the finite-time Lyapunov exponent (FTLE). Let us consider the initial time $t=t_0=0$ and the final time $t=T$. The change in the initial infinitesimal perturbation is given by
$$
		\delta \vx(T) =f(\vx+\delta\vx(0))-f(\vx) 
		= \nabla f(\vx)\delta\vx(0)+O(\|\delta\vx(0)\|^2)\,.
$$
The magnitude of the leading-order term of this perturbation is given by 
$$\|\delta\vx(T)\|=\sqrt{\langle \delta\vx(0),[\nabla f(\vx)]^{\prime}\nabla f(\vx)\delta\vx(0) \rangle}$$ where $(\cdot)^{\prime}$ denotes the transpose of a matrix. We denote $\varDelta^T_0(\vx)=[\nabla f(\vx)]^\prime\nabla f(\vx)$ as the Cauchy-Green deformation tensor.

To find the maximum value of $|\delta\vx(T)|$, we align $\delta\vx(0)$ with the eigenvector $\textbf{e}(0)$ associated with the maximum eigenvalue of $\varDelta^T_0(\vx)$. Thus, $\max_{\delta\vx(0)}|\delta\vx(T)|=\sqrt{\lambda_{\max}(\varDelta^T_0(\vx))}|\textbf{e}(0)|=e^{\sigma^T_0(\vx)|T|}|\textbf{e}(0)|$. Therefore, the FTLE $\sigma^T_0(\vx)$ is defined as
\begin{equation}
\sigma^T_0(\vx)=\frac{1}{|T|}\ln\sqrt{\lambda_{\max}(\varDelta^T_0(\vx))} \, .
\label{eq:FTLE}
\end{equation}

Numerical methods for computing the flow map $f(\mathbf{x})$ typically involve ray tracing, where we solve the ODE (\ref{Eqn:ODE}) or the corresponding level set equation \cite{leu11} on a Cartesian mesh at the initial time $t=t_0=0$. Then, for each grid point in the computational domain, we construct the Cauchy-Green deformation tensor by finite differencing the \textit{forward} flow map $f(\mathbf{x}^i_g)$ on the Cartesian mesh at $t=0$. Using equation (\ref{eq:FTLE}), we can compute the \textit{forward} FTLE. Whether we use the Lagrangian (ODE) or the Eulerian (PDE) approach, the FTLE only requires the flow map from the initial location at the initial time to the final arrival location at the final time. It disregards the intermediate trajectory and focuses solely on the particle's initial and final positions. As a result, if an initial patch of particles remains close after a finite period, the FTLE disregards any significant dispersion during the intermediate time and returns a small value. This limitation is unsatisfactory because even if the trajectories of these particles differ significantly, the FTLE fails to capture such unique flow structures. 

It is worth mentioning that while the formulation of FTLE drops the information of the intermediate time steps, it is indeed possible to address this issue by extending the flowmap with an extra time dimension. This approach, as proposed in \cite{youwonleu17}, allows for the preservation of the intermediate time information. However, it is important to note that solving the resulting partial differential equation (PDE) in the extended space can be computationally expensive in terms of both time complexity and space complexity. The high time complexity arises from solving the PDE, while the space complexity increases due to the need to store and manipulate the series of flowmaps. In contrast, our method provides an alternative approach that avoids directly solving the flowmap while still offering a formulation to summarize the information obtained from the intermediate time steps. This alternative approach can be more computationally efficient compared to directly solving the extended flowmap PDE.

%It is worth mentioning the relationship between the FTLE field and the Lagrangian Coherent Structure (LCS). Coherent structures are often defined as regions in the domain with minor deviations in specific flow-related quantities such as vorticity, lifetime, and scale \cite{FIEDLER1988231}. In this context, we focus on the LCS, which characterizes the variation in particle trajectories due to perturbations in their initial locations. The LCS over a finite time domain $[t_{0},t_{0}+T]$ is a codimension one surface in the extended phase space, $\Omega\times[t_{0},t_{0}+T]$, that exhibits the strongest repelling or attracting behavior over particle positions in the domain $\Omega$. Various mathematical approaches have been proposed to define the LCS \cite{HALLER2011574,SHADDEN2005271}. One approach involves examining the second derivative ridge of the FTLE field \cite{SHADDEN2005271}. However, as pointed out in \cite{HALLER2011574}, the resulting structure does not necessarily correspond to an LCS. Nevertheless, in practical applications, it has been observed that the second derivative ridge \cite{SHADDEN2005271} is often sufficient for extracting or approximating the LCS in dynamical systems \cite{Rubino_2015,doi:10.1063/1.3276061,https://doi.org/10.1002/nme.3101}.

%%%%%%%%%%%%%%%%%%%

\section{Local Trajectory Variation Exponent (LTVE)}
\label{Sec:LTV}

In this section, we formulate our proposed quantity. Since we consider the variations among various local trajectories, we name the quantity the \textit{Local Trajectory Variation (LTV)} and \textit{LTVE} for the corresponding exponent.

\subsection{Theoretical Framework}
We define the Local Trajectory Variation (LTV) and the LTV exponent (LTVE) using a similar approach as the FTLE. Consider the metric space $(\Phi, d)$, where $\Phi$ is the space of all trajectories and $d$ is the trajectory metric. Let $\Gamma: \Omega \mapsto \Phi$ be the mapping that assigns each particle to its trajectory. Therefore, $\Gamma(\mathbf{x}_0) = \Psi_{\mathbf{x}_0}$. To capture only the intrinsic shape difference in trajectories, disregarding the effect of physical distance due to variations in initial positions, we require that the mapping $\Gamma$ instead maps to the displacement trajectory. Following a similar approach as the FTLE, we perturb the input to $\Gamma$ and measure the perturbation effect using the trajectory metric $d\left(\Gamma(\mathbf{x}_0 + \delta), \Gamma(\mathbf{x}_0)\right)$. Formally:

\begin{definition}[Local Trajectory Variation]
Given a trajectory field $\Gamma: \Omega \mapsto \Phi$ and a small $\delta \in \mathbb{R}^+$, denote $\mathcal{B}^{\Omega}_{\delta}(\vx)=\{\vu\in\mathbb{R}^{n}|\norm{\vu}=1\land\vx+\delta\vu\in\Omega\}$, we define the LTV at any $\mathbf{x} \in \Omega$ as
$$
\text{LTV}(\vx;\delta,T)=\max_{\vu\in\mathcal{B}^{\Omega}_{\delta}(\vx)}\;d(\Gamma(\vx+\delta\vu),\Gamma(\vx)) \, .
$$
\end{definition}

\begin{definition}[Local Trajectory Variation Exponent]
    Given $\delta \in \mathbb{R}^+$ and the LTV function LTV$(\vx;\delta,T)$, the LTVE$:\Omega\mapsto\mathbb{R}$ is defined as
    $$\text{LTVE}(\vx;\delta,T)=\frac{1}{\abs{T}}\ln\left(\frac{1}{\delta}\text{LTV}(\vx;\delta,T)\right)$$
\end{definition}

\subsection{Relation with FTLE}
\label{SubSec:FTLEvsLTVE}
Intuitively, the flow map introduced in the FTLE can be viewed as a $2$-trajectory, a trajectory of length $2$, and thus the FTLE can be viewed as a special case of LTVE. Below, we provide a rigorous linkage between FTLE and LTVE. Let $d$ be the $l^{2}$-norm distance, $\Gamma$ be the discrete $2$-trajectory mapping, $\sigma(\vx)$ be the FTLE field, and $f$ be the flow map induced in the construction of FTLE. 
Let $\Lambda_{f}:\Omega\mapsto\R$ be the following function:
$$
\Lambda_{f}(\vx):=\max_{\lambda} \{\lambda\in\R|\exists\vu\in\R^{n}\setminus\{\mathbf{0}\},\Delta^{T}_{0}(\vx)\vu=\lambda\vu\} \, .
$$
Then we start by showing the following lemma.
\begin{lemma}\label{lem:1}
    If $f\in C^{1}$ on $\Omega$, then $\Lambda_{f}$ is continuous on $\Omega$.
\end{lemma}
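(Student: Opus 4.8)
The plan is to reduce the statement to two routine continuity facts and then compose them. First I would observe that for every $\vx\in\Omega$ the tensor $\Delta^T_0(\vx)=[\nabla f(\vx)]'\nabla f(\vx)$ is symmetric and positive semidefinite, so by the spectral theorem it has real nonnegative eigenvalues; in particular the set of real eigenvalues appearing in the definition of $\Lambda_f(\vx)$ is nonempty and bounded, the ``$\max$'' is attained, and $\Lambda_f(\vx)=\lambda_{\max}(\Delta^T_0(\vx))\ge 0$. Thus it suffices to prove that $\vx\mapsto\lambda_{\max}(\Delta^T_0(\vx))$ is continuous on $\Omega$.

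The first ingredient is that the matrix-valued map $\vx\mapsto\Delta^T_0(\vx)$ is continuous. This is immediate: since $f\in C^1$ on $\Omega$, every entry of the Jacobian $\nabla f(\vx)$ depends continuously on $\vx$, and every entry of $\Delta^T_0(\vx)$ is a finite sum of products of those entries; equivalently, $A\mapsto A'A$ is a polynomial, hence continuous, map on $\mathbb{R}^{n\times n}$, composed with the continuous map $\vx\mapsto\nabla f(\vx)$. Hence $\vx\mapsto\Delta^T_0(\vx)$ is continuous into the space of symmetric matrices equipped with, say, the operator norm $\norm{\cdot}$.

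The second ingredient is that $A\mapsto\lambda_{\max}(A)$ is continuous on the symmetric matrices; in fact I would show it is $1$-Lipschitz for $\norm{\cdot}$. Using the Rayleigh-quotient characterization $\lambda_{\max}(A)=\max_{\norm{\vu}=1}\inprod{\vu,A\vu}$ valid for symmetric $A$, for any symmetric $A,B$ and any unit vector $\vu$ we have $\abs{\inprod{\vu,A\vu}-\inprod{\vu,B\vu}}=\abs{\inprod{\vu,(A-B)\vu}}\le\norm{A-B}$; maximizing over unit $\vu$ and then swapping the roles of $A$ and $B$ gives $\abs{\lambda_{\max}(A)-\lambda_{\max}(B)}\le\norm{A-B}$. (Alternatively one could cite Weyl's perturbation inequality for symmetric matrices, or the continuity of the roots of the characteristic polynomial as a function of its coefficients.) Composing the two ingredients finishes the argument: given $\vx_0\in\Omega$ and $\e>0$, choose $\delta>0$ so that $\norm{\Delta^T_0(\vx)-\Delta^T_0(\vx_0)}<\e$ whenever $\vx\in\Omega$ and $\abs{\vx-\vx_0}<\delta$, and conclude $\abs{\Lambda_f(\vx)-\Lambda_f(\vx_0)}\le\norm{\Delta^T_0(\vx)-\Delta^T_0(\vx_0)}<\e$.

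I do not expect a genuine obstacle here; this is a soft statement. The only points that deserve an explicit sentence are (i) identifying $\Lambda_f(\vx)$ with $\lambda_{\max}$ of a symmetric positive semidefinite matrix, which is what legitimizes both the spectral theorem and the Rayleigh-quotient formula, and (ii) the Lipschitz estimate in the second step; the remaining content is just continuity of polynomials and composition of continuous maps.
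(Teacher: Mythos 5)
Your proof is correct and follows essentially the same route as the paper's: both arguments rest on the Rayleigh-quotient characterization of the largest eigenvalue together with the bound of a difference of maxima by the maximum of differences. The only difference is organizational --- the paper works directly with $\vv\mapsto\norm{\nabla f(\vx)\cdot\vv}^{2}$, factoring the difference of squares and invoking the bound $\norm{\nabla f}<L$ on the compact domain, whereas you push that product structure into the continuity of $\vx\mapsto\Delta^{T}_{0}(\vx)$ and isolate a clean $1$-Lipschitz estimate for $\lambda_{\max}$ on symmetric matrices.
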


\begin{proof}
To begin with, we note that the function $\Lambda_{f}$ is well-defined since the deformation tensor $\Delta^{T}_{0}$ is symmetric. Now, we have
\begin{align*}
\Lambda_{f}(\vx+\vu)-\Lambda_{f}(\vx)&=\max_{\vv\in\mathcal{B}^{\Omega}_{\delta}(\vx)}\norm{\nabla f(\vx+\vu)\cdot \vv}^{2}-\max_{\vv\in\mathcal{B}^{\Omega}_{\delta}(\vx)}\norm{\nabla f(\vx)\cdot \vv}^{2}\\
    &\leq \max_{\vv\in\mathcal{B}^{\Omega}_{\delta}(\vx)}\left\{\norm{\nabla f(\vx+\vu)\cdot \vv}^{2}-\norm{\nabla f(\vx)\cdot \vv}^{2}\right\}\\
    &=\max_{\vv\in\mathcal{B}^{\Omega}_{\delta}(\vx)}\left\{(\norm{\nabla f(\vx+\vu)\cdot \vv}+\norm{\nabla f(\vx)\cdot \vv})(\norm{\nabla f(\vx+\vu)\cdot \vv}-\norm{\nabla f(\vx)\cdot \vv})\right\} \, .
\end{align*}
Since $\Omega$ is closed and bounded, and $f\in C^{1}$ implies $\norm{\nabla f(\vx)}<L$ for some $L\in\mathbb{R}$, we have
$$
\Lambda_{f}(\vx+\vu)-\Lambda_{f}(\vx)\leq 2L\max_{\vv\in\mathcal{B}^{\Omega}_{\delta}(\vx)}\norm{\nabla f(\vx+\vu)\cdot \vv-\nabla f(\vx)\cdot \vv}\leq 2L\norm{\nabla f(\vx+\vu)-\nabla f(\vx)} \, .
$$
Similarly, we can obtain another inequality $\Lambda_{f}(\vx)-\Lambda_{f}(\vx+\vu)\leq 2L\norm{\nabla f(\vx+\vu)-\nabla f(\vx)}$. These two inequalities imply that $\abs{\Lambda_{f}(\vx+\vu)-\Lambda_{f}(\vx)}\leq 2L\norm{\nabla f(\vx+\vu)-\nabla f(\vx)}$. Then it follows that $f\in C^{1}$ implies the latter term can be bounded arbitrarily small.
\end{proof}

From the definition of FTLE, we can express the FTLE field using the greatest eigenvalue $\lambda_{\max}(\vx)$ of $\Delta_{0}^{T}(\vx)$. In particular, we have $\sigma(\vx)=\frac{1}{2\abs{T}}\ln\lambda_{\max}$. Hence, for the FTLE value to be well-defined over $\Omega$, we require $\lambda_{\max}>0$ at all $\vx\in\Omega$. Then, it follows from Lemma \ref{lem:1} that $\lambda_{*}:=\min_{\vx\in\Omega}\Lambda_{f}(\vx)$ is well-defined and satisfies $0\leq\lambda_{*}\leq\lambda_{\max}(\vx)$ for all $\vx\in\Omega$. Since we assumed the mapping maps to the displacement trajectory $\Psi$, any $2$-trajectory evaluated under such a norm will give the particle displacement at time $t=t_{0}+T$. Namely,
$$
\text{LTV}(\vx)=\max_{\vu\in\mathcal{B}^{\Omega}_{\delta}(\vx)}\;d(\Gamma(\vx+\delta\vu),\Gamma(\vx))=\max_{\vu\in\mathcal{B}^{\Omega}_{\delta}(\vx)}\norm{f(\vx+\delta\vu)-f(\vx)-\delta\vu}_{2} \, .
$$
For simplicity, we denote $\mathcal{F}_{\delta}(\vx):=\max_{\vu\in\mathcal{B}^{\Omega}_{\delta}(\vx)}\left\{\norm{f(\vx+\delta\vu)-f(\vx)}_{2}\right\}$. Recall from the construction of FTLE, we have $\mathcal{F}_{\delta}(\vx)\approx \sqrt{\lambda_{\max}}\delta\geq \sqrt{\lambda_{*}}\delta$. Thus, we can approximate the FTLE as $\sigma(\vx)\approx\frac{1}{\abs{T}}\ln\frac{\mathcal{F}_{\delta}(\vx)}{\delta}$. Then consider
\begin{align*}
    \text{LTV}(\vx)^{2}&=\left[\max_{\vu\in\mathcal{B}^{\Omega}_{\delta}(\vx)}\norm{f(\vx+\delta\vu)-f(\vx)-\delta\vu}_{2}\right]^{2}
    =\max_{\vu\in\mathcal{B}^{\Omega}_{\delta}(\vx)}\norm{f(\vx+\delta\vu)-f(\vx)-\delta\vu}_{2}^{2}\\
    &=\max_{\vu\in\mathcal{B}^{\Omega}_{\delta}(\vx)}\left\{\norm{f(\vx+\delta\vu)-f(\vx)}_{2}^{2}+\delta^{2}\norm{\vu}^{2}_{2}-2\delta\inprod{f(\vx+\delta\vu)-f(\vx),\vu}\right\}\\
    &=\max_{\vu\in\mathcal{B}^{\Omega}_{\delta}(\vx)}\left\{\norm{f(\vx+\delta\vu)-f(\vx)}_{2}\left(\norm{f(\vx+\delta\vu)-f(\vx)}_{2}-2\delta\norm{\vu}_{2}\cos\theta\right)\right\}+\delta^{2}
\end{align*}
where $\theta$ is determined by the angle between $f(\vx+\delta\vu)-f(\vx)$ and $\vu$. Taking the trivial bound of $\cos\theta\geq -1$, we obtain
$\text{LTV}(\vx)^{2}\leq \mathcal{F}_{\delta}(\vx)\left(\mathcal{F}_{\delta}(\vx)+2\delta\right)+\delta^{2}$, and therefore,
$$
\frac{\text{LTV}(\vx)^{2}}{\mathcal{F}_{\delta}(\vx)^{2}}\leq 1+\frac{\delta^{2}}{\mathcal{F}_{\delta}(\vx)^{2}}+2\frac{\delta}{\mathcal{F}_{\delta}(\vx)}\implies \frac{\text{LTV}(\vx)}{\mathcal{F}_{\delta}(\vx)}\leq 1+\frac{\delta}{\mathcal{F}_{\delta}(\vx)} \, .
$$
Since the logarithmic function is increasing, we have
$$
\ln\frac{\text{LTV}(\vx)}{\mathcal{F}_{\delta}(\vx)}\leq \ln\left(1+\frac{\delta}{\mathcal{F}_{\delta}(\vx)}\right)\leq\ln\left(1+\frac{1}{\sqrt{\lambda_{*}}}\right) \, .
$$
On the other hand, by considering $g(\vx):=f(\vx)-\vx$, we can rewrite LTV in a different form, $\text{LTV}(\vx)=\max_{\vu\in\mathcal{B}^{\Omega}_{\delta}(\vx)}\norm{g(\vx+\delta\vu)-g(\vx)}\approx\max_{\vu\in\mathcal{B}^{\Omega}_{\delta}(\vx)}\norm{\nabla g\cdot\delta\vu}$. Then since $f\in C^{1}$, we have $g\in C^{1}$. Hence following the same approach, we can define $\eta_{*}:=\min_{\vx\in\Omega}\Lambda_{g}(\vx)$ and thus $\text{LTV}(\vx)\geq \sqrt{\eta_{*}}\delta$ for all $\vx\in\Omega$. Hence we have 
$\ln\left[ {\mathcal{F}_{\delta}(\vx)}/ \text{LTV}(\vx) \right] \leq\ln \left(1+{1}/{\sqrt{\eta_{*}}} \right)$. Thus it follows from
$$
\abs{\text{LTVE}(\vx)-\sigma(\vx)}=\frac{1}{\abs{T}}\abs{\ln\left(\frac{\text{LTV}(\vx)}{\delta}\right)-\ln\left(\frac{\mathcal{F}_{\delta}(\vx)}{\delta}\right)}=\frac{1}{\abs{T}}\abs{\ln\frac{\text{LTV}(\vx)}{\mathcal{F}_{\delta}(\vx)}} \, ,
$$
we arrive our final conclusion that
\begin{equation}
    \abs{\text{LTVE}(\vx)-\sigma(\vx)}\leq\frac{1}{\abs{T}}\max\left\{\ln\left(1+\frac{1}{\sqrt{\eta_{*}}}\right),\ln\left(1+\frac{1}{\sqrt{\lambda_{*}}}\right)\right\} 
    \label{Eqn:LTVEvsFTLE}
\end{equation}
for all $\vx\in\Omega$.

The introduction of $\lambda_*$ and $\eta_*$ in equation (\ref{Eqn:LTVEvsFTLE}) aims to establish a universal bound with respect to the domain $\Omega$. By definition, $\lambda_*$ and $\eta_*$ represent the lower bounds of $\Lambda_f$ and $\Lambda_g$, respectively. This implies that for each point in the domain $\Omega$, there exists a direction of perturbation that induces a change in the trajectory. Considering this condition is natural, as the FTLE should be well-defined at a point when the maximal eigenvalue of the deformation tensor is positive, indicating trajectory deviation when perturbed. Furthermore, allowing the bounds to depend on $\mathbf{x}$, the location in the domain, can yield tighter bounds. Intuitively, if we evaluate the bound near the LCS, the corresponding maximal eigenvalue should be large, resulting in a smaller bound between the LTVE and FTLE quantities. Ideally, our quantity can capture all structures that the FTLE captures by applying the $l^2$-norm with two trajectories.

Instead of solely considering the takeoff and arrival locations of a trajectory, we have expanded our investigation by adding a third point to each trajectory data. This addition allows us to explore longer trajectories and their impact on our quantity. In Section \ref{SubSec:3trajEval}, we will present an example that demonstrates the superiority of our quantity over the FTLE when considering these longer trajectories. This example will showcase a significant difference between the LTVE with three trajectories and the FTLE, emphasizing the advantages of our approach.

\subsection{Numerical Scheme and Error Analysis}
To perform the numerical computation of the quantity, it is necessary to discretize and apply relaxation techniques to make the problem computationally feasible. Below, we provide a brief summary of these processes and analyze their associated errors.

First, we focus on the discretization in the time domain. We employ a simple scheme to approximate the real trajectory. We uniformly sample $k$ points along the trajectory, where $k$ can be adjusted either adaptively or manually depending on the problem. The trajectory is then approximated by a polygonal line connecting these sampling points, which essentially provides a linear approximation. While this approximation may introduce imprecise results when $k$ is poorly chosen, opting for a higher-order approximation would increase the complexity of designing and implementing the trajectory comparison metric.

Now, we analyze the error introduced by this approximation. For any continuous $\Psi$, we denote $\Psi_{\Delta}$ as the discrete trajectory approximated by the polygonal line. Similarly, for the continuous trajectory metric $d$, we denote $d_{\Delta}$ as its discrete counterpart. For any (continuous) trajectories $\Psi_{1}$ and $\Psi_{2}$, using metric properties, we have:
$$
d(\Psi_{1},\Psi_{2})\leq d(\Psi_{1},\Psi_{1,\Delta})+d(\Psi_{1,\Delta},\Psi_{2,\Delta})+d(\Psi_{2},\Psi_{2,\Delta})
$$
and
$$
d(\Psi_{1,\Delta},\Psi_{2,\Delta})\leq d(\Psi_{1},\Psi_{1,\Delta})+d(\Psi_{1},\Psi_{2})+d(\Psi_{2},\Psi_{2,\Delta}) \, .
$$ 
Combining them, we have $\abs{d(\Psi_{1},\Psi_{2})-d(\Psi_{1,\Delta},\Psi_{2,\Delta})}\leq d(\Psi_{1},\Psi_{1,\Delta})+d(\Psi_{2},\Psi_{2,\Delta})$. Hence the error can be separated into two parts,
\begin{align*}
    \abs{d(\Psi_{1},\Psi_{2})-d_{\Delta}(\Psi_{1,\Delta},\Psi_{2,\Delta})}&\leq \abs{d(\Psi_{1},\Psi_{2})-d(\Psi_{1,\Delta},\Psi_{2,\Delta})}+\abs{d(\Psi_{1,\Delta},\Psi_{2,\Delta})-d_{\Delta}(\Psi_{1,\Delta},\Psi_{2,\Delta})}\\
    &\leq d(\Psi_{1},\Psi_{1,\Delta})+d(\Psi_{2},\Psi_{2,\Delta})+\abs{d(\Psi_{1,\Delta},\Psi_{2,\Delta})-d_{\Delta}(\Psi_{1,\Delta},\Psi_{2,\Delta})} \, .
\end{align*}
Taking the supremum over the space of trajectories, we have the error bound
$$
2\sup_{\Psi}\{d(\Psi_{1},\Psi_{1,\Delta})\}+\sup_{\Psi_{1,\Delta},\Psi_{2,\Delta}}\abs{d(\Psi_{1,\Delta},\Psi_{2,\Delta})-d_{\Delta}(\Psi_{1,\Delta},\Psi_{2,\Delta})} \, .
$$
The first part addresses the error resulting from estimating the trajectory using a polygonal line under the desired metric. As mentioned earlier, this error arises from the use of linear approximation for trajectories. One can reduce this term by increasing the number of sampling points, adjusting the parameter $k$, or employing a higher-order approximation scheme to achieve a more accurate solution. The second part of the error analysis stems from the discretization of the metric itself. Approximating the metric using its discrete variant introduces additional error. For metrics such as the $L^{p}$-norm, one can reduce this term to arbitrary precision by employing a higher order of numerical integration. Explicitly considering the discretization of the metric is important, especially for metrics like the Fr\'{e}chet distance and Hausdorff metric. The computational complexity of these metrics in the continuous setting is prohibitively high for practical implementation. Therefore, including an approximation of the metric itself becomes necessary. The actual order of the error depends on the chosen metric and underlying field, as mentioned previously. Providing a general bound on the error without specifying the metric is challenging. In our chosen metric for the experiment, all these errors are well-bounded. The derivation of these bounds is either standard or well-studied in the literature, and we omit it here for brevity.

Next, we focus on the relaxation in the space domain. It is important to note that solving for the optimal $\mathbf{u}$ can be computationally expensive. However, we can adopt an approach similar to the FTLE by constructing the Cauchy-Green deformation tensor for $T$. It should be noted that this process is more complex compared to the FTLE case, given the higher dimensionality involved. Nevertheless, as we will see in later numerical experiments, our relaxation scheme is sufficient for extracting the LCS.

Our relaxation scheme is straightforward. Instead of comparing the trajectories within the $\delta$-unit circle, we only compare them to nearby trajectories. The specific choice of neighborhood depends on the scheme specified by the users. Below, we propose several possible schemes for selecting the neighborhood:
\begin{definition}[First Order Relaxed-LTV]
    Given a trajectory field $\Gamma:\Omega\mapsto\Phi$, fix some small $\delta\in\mathbb{R}^{+}$, denote $e_{j}$ to be the $j$-th standard basis in $\mathbb{R}^{n}$, then $\forall \vx\in\Omega$, we define
    $$\text{RLTV}_{1}(\vx)=\max_{e_{j}\in\mathcal{B}^{\Omega}_{\delta}(\vx)}\max\left\{d(\Gamma(\vx+\delta e_{j}),\Gamma(\vx)),d(\Gamma(\vx-\delta e_{j}),\Gamma(\vx))\right\}\, .$$
\end{definition}

\begin{definition}[Second Order Relaxed-LTV]
    Given a trajectory field $\Gamma:\Omega\mapsto\Phi$, fix some small $\delta\in\mathbb{R}^{+}$, denote $e_{j}$ to be the $j$-th standard basis in $\mathbb{R}^{n}$, then denote
    $\mathcal{N}:=\{\vu\in\mathbb{R}^{n}|\vu=\pm a_{1}e_{1}\pm a_{2}e_{2}\pm...\pm a_{n}e_{n},\,a_{i}=\{0,1\}\enspace\forall i\}$.
    Then $\forall \vx\in\Omega$, we define
    $$\text{RLTV}_{2}(\vx)=\max_{\vu\in \mathcal{N}\cap\mathcal{B}^{\Omega}_{\delta}(\vx)}d(\Gamma(\vx+\delta \vu),\Gamma(\vx)) \, .$$
\end{definition}

\begin{definition}[Third Order Relaxed-LTV]
    Given a trajectory field $\Gamma:\Omega\mapsto\Phi$, fix some small $\delta\in\mathbb{R}^{+}$, denote $e_{j}$ to be the $j$-th standard basis in $\mathbb{R}^{n}$, then denote
    $\mathcal{N}:=\{\vu\in\mathbb{R}^{n}|\vu=\pm a_{1}e_{1}\pm a_{2}e_{2}\pm...\pm a_{n}e_{n},\,a_{i}=\{0,1\}\enspace\forall i\}$
    and
    $\mathcal{N'}:=\{\vv\in\mathbb{R}^{n}|\vv=\vu+e_{i},\,\forall \vu\in\mathcal{N},\,\forall i\}\setminus\{2e_{i}\in\mathbb{R}^{n}|\forall i\} \, .$
    Then $\forall \vx\in\Omega$, we define
    $$\text{RLTV}_{3}(\vx)=\max_{\vu\in \mathcal{N'}\cap\mathcal{B}^{\Omega}_{\delta}(\vx)}d(\Gamma(\vx+\delta \vu),\Gamma(\vx)) \, .$$
\end{definition}

The corresponding RLTVE are defined similarly as in the LTVE case and we omit it here for brevity. While it is indeed possible to adopt a random sampling approach, such as Monte-Carlo sampling, to approximate the optimal $\mathbf{u}$ without using relaxation, we have chosen to utilize the relaxation scheme proposed for improving time complexity. The main advantage of the relaxation scheme is the ability to reuse trajectories. Since all the comparison points lie on the mesh points, it is sufficient to compute the trajectories only at the mesh points. This significantly reduces the computational complexity. If we were to use random sampling, the complexity would scale linearly with the sampling size. By utilizing the relaxation scheme and working with the mesh points, we can achieve better computational efficiency compared to random sampling approaches.

%\reminder{other relaxation later} done, experiment todo

\subsection{Algorithm}
Below we present the main algorithm for computing the RLTVE field and analyze its time and space complexity. Given a rectangular computation domain $\Omega=[x_{1,0},x_{1,1}]\times...\times [x_{n,0},x_{n,1}]$ and the Lipchitz velocity field $\vv:\Omega\mapsto\mathbb{R}^{n}$, fix some small $\delta\in\mathbb{R}^{+}$ for the relaxation condition, then we adapt a uniform mesh such that each particles are separated with distance $\delta$ along each standard axis direction.

\begin{algorithm*}[h!]
    \DontPrintSemicolon
    \caption{RLTVE}
    \SetKwFunction{RLTVE}{RLTVE}
    \SetKwProg{Fn}{Function}{:}{end}
    \Fn{\RLTVE{$\vv:\Omega\mapsto\mathbb{R}^{n}$,$\delta\in\mathbb{R}^{+}$}}{
        $M\leftarrow$ uniform meshes with $\Delta x_{j}=\delta$ for all $j\in[1,d]$\\
        $\Psi\leftarrow$ Solve the trajectory field from $\vv$ using ODE solver with $M$ as meshes\\
        $\Psi\leftarrow$ transform to displacement trajectory\\
        $RLTVE\leftarrow$ Int array of equal dimension as $M$ with initial value $0$\\
        \For{$\vx\in M$}{
            $\mathcal{N}\leftarrow$ neighbourhood of $\vx$ defined by the chosen scheme.\\
            $RLTVE[\vx]\leftarrow\frac{1}{\abs{T}}\ln\frac{1}{\delta}\max_{\vu\in\mathcal{N}\cap\mathcal{B}^{\Omega}_{\delta}(\vx)}d(T(\vx+\delta\vu),T(\vx))$
        }
        \Return{$RLTVE$}
    }
\end{algorithm*}

\subsection{Implementation Details and Analysis}

In this section, we summarize some technical implementation details and provide some analysis of the algorithm's performance. We will consider both the computational time complexity and the algorithm's memory requirement. 

\paragraph{Boundary Condition.} Upon solving the trajectories using the given velocity field, it is possible for particles to exit the computational domain $\Omega$. In cases where no prior information is available outside $\Omega$, we need to impose artificial boundary conditions on the numerical integrator. One approach, as suggested in \cite{doi:10.1063/1.3276061}, is to implement a velocity extension algorithm. This method creates an artificial layer that gradually stops the particles, similar to the absorbing boundary condition used in solving the wave equation. However, in our work, instead of exploring the effects of the velocity extension, we directly enforce a stopping boundary condition. When the numerical solution from the ODE leaves the computational domain, we halt the numerical integrator and project the solution onto the boundary of the computational domain $\partial \Omega$. We will provide more details on this effect in Section \ref{SubSec:ArtificialRidge}.

\paragraph{Time Complexity.} Now let us analyze the time complexity. First, we note that using uniform meshes $M$ is not necessary for actual implementation. All particle positions can be computed using looping indices with a given $\delta$. Therefore, the time complexity mainly comes from the ODE solver and the computation of RLTVE. However, it is possible to pre-compute and store trajectory data, reading it from a file during computation. Still, it takes $O\left({\abs{\Omega}}{\delta^{-n}}k\right)$ time to write all the values for each trajectory in the domain. Next, for computing the RLTVE, although depending on the scheme of relaxation chosen, the comparison done for each particle can be approximated as $O(n)$. It is important to note that the complexity of each computation depends on the metric $d$, and it is generally not negligible as the number of sampling points in each trajectory increases. Let us denote the time complexity for computing a single query of the metric as $T_{d}(n,k)$. The total time complexity for computing the RLTVE would then be $O\left({\abs{\Omega}}{\delta^{-n}}nT_{d}(n,k)\right)$. It is worth mentioning that for the trajectory metric applied in this project, it takes at least $\Omega(k)$ time to compute. Furthermore, since comparing two arbitrary trajectories without any special constraints requires going through each sampling point at least once, we can assume that $T_{d}(n,k)\in\Omega(k)$. Hence the complexity is at least $O\left({\abs{\Omega}}{\delta^{-n}}n k\right)$ but could be potentially higher. In summary, the total time complexity for general metric is given by $O\left({\abs{\Omega}}{\delta^{-n}}nT_{d}(n,k)\right)$.

\paragraph{Storage.} Next, we consider the memory required for the algorithm. A naive approach of saving all trajectories and the computed RLTVE values at each point would take space of $O\left({\abs{\Omega}}{\delta^{-n}}(k+1)\right)$. However, we can reduce it to $O\left({\abs{\Omega'}}{\delta^{-(n-1)}}k\right)$ without affecting the time complexity, where $\abs{\Omega'}={\abs{\Omega}}/{\max_{j\in [1,n]}\abs{x_{j,1}-x_{j,0}}}$. The reduction in space usage is based on the observation that our calculations are based on comparisons with the neighborhood. Therefore, we can discard trajectory data that is too far from the current computing position. To achieve this, we first select an axis along which $\Omega$ has its longest side. We then compute and store all the trajectories on the co-dimension one faces that are tangential to the chosen longest axis and touch one of the boundaries of the corresponding side. Next, we move to the next set of faces, shifting $\delta$ from the boundary of the longest side. Using the previously calculated values, we can reduce the number of calculations required in these faces by $1$. After computing all the faces, we delete the stored values from the previous layer and replace them with the values calculated in the current layer. With this approach, we only need to store at most $O\left({\abs{\Omega'}}{\delta^{-(n-1)}}(k+1)\right)$, which is on the surface. Furthermore, for the RLTVE values, they can be computed and written to the output file on-the-fly during the trajectory computation. This further reduces our memory complexity to $O\left({\abs{\Omega'}}{\delta^{-(n-1)}}k\right)$.

%%%%%%%%%

\subsection{Trajectory Metric}
\label{SubSec:Metric}
In this section, we specify our choice of metric. In this paper, we investigate three different trajectory metrics, and we are going to give a brief definition of each below. In the following discussion, $\Phi$ denotes the time-continuous trajectory space, and $\Phi_{\Delta}$ represents the space of all $k$-discrete trajectory.

\subsubsection{$L^{2}$-norm}
As might be expected that the most natural choice for trajectory metric will be the $L^{2}$-norm, it is a well-studied norm in classic analysis and serve as a good intuitive for understanding the physical meaning of the value captured by our quantity. Moreover it is easy to be computed, in particular its discrete variant, $l^{2}$ norm, has a linear complexity and is easy to implement.
\begin{definition}[Normalized $L^{2}$-norm]
    The normalized $L^{2}$-norm $\norm{\cdot}_{\overline{L}^{2}}:\Phi\mapsto\mathbb{R}$ is defined as
    $\norm{\Psi}_{\overline{L}^{2}}^{2}=\frac{1}{\abs{T}}\int_{t_{0}}^{t_{0}+T}\abs{\Psi(t)}^{2}\,dt$.
    For the discrete variant, we naturally take the discrete analog of $L^{2}$-norm, we define the normalized $l^{2}$-norm as $\norm{\cdot}_{\overline{l^{2}}}:\Phi_{\Delta}\mapsto\mathbb{R}$,
    $\norm{\Psi_{\Delta}}_{\overline{l^{2}}}^{2}=\frac{1}{k}\norm{\Psi_{\Delta}}_{l^{2}}^{2}$
    where $\norm{\cdot}_{l^{2}}$ denote the conventional $l^{2}$-norm on $\mathbb{R}^{kn}$, such definition is safe by recalling the isomorphism of $\mathbb{R}^{kn}$ and $\Phi_{\Delta}$ as mentioned in \thref{def_discrete}.
\end{definition}

\subsubsection{Fr\'{e}chet Distance}
Fr\'{e}chet distance is a quantity that captures the minimal maximum separation of trajectories over reparameterization in the time domain. It is a popular similarity measure of trajectories used in the field of trajectory analysis and clustering. This distance measure considers the reparameterization of the trajectory in the time-domain, which makes it interesting to study, as such a property might be able to capture structures that the classical method fails to identify.
\begin{definition}[Fr\'{e}chet Distance]
    Denote $\mathcal{P}$ as the space of function $f:[t_{0},t_{0}+T]\mapsto[t_{0},t_{0}+T]$ which satisfies $\forall f\in\mathcal{P}$, $f$ is continuous, surjective and is non-decreasing.
    Then the Fr\'{e}chet distance $d_{F}:\Phi\times\Phi\mapsto\mathbb{R}$ is defined as
    $$d_{f}(\Psi_{1},\Psi_{2})=\inf_{\alpha,\beta\in\mathcal{P}}\max_{\tau\in[t_{0},t_{0}+T]}\norm{\Psi_{1}(\alpha(\tau))-\Psi_{2}(\beta(\tau))}_{2}$$
    where $\norm{\cdot}_{2}$ is the $l^{2}$ norm induced by the space $\mathbb{R}^{n}$.
\end{definition}

Unfortunately, there are certain issues with the Fr\'{e}chet distance. The first issue is that it does not meet the requirement of being a metric. Specifically, it is only a pseudo-metric, which means it fails to meet the condition that $d(a,b)=0\iff  a=b$. However, we can relax this condition and still use the Fr\'{e}chet distance by using its discrete variant. The work \cite{Eiter1994ComputingDF} defined the discrete variant of the Fr\'{e}chet distance and showed that it preserves the metric property. It also serves as a good approximation to the Fr\'{e}chet distance with respect to the sampling size, resolving the first issue. The second issue is the time complexity. The discrete Fr\'{e}chet distance can be computed using dynamic programming, which has a runtime of $O(nm)$, where $n$ and $m$ are the lengths of the two trajectories. In \cite{DBLP:journals/corr/Bringmann14}, it is proven that for both continuous and discrete Fr\'{e}chet distance, there is no strong subquadratic algorithm. This means that there is no algorithm with a complexity of $O(n^{2-\delta})$ for some $\delta>0$, unless the strong exponential time hypothesis (SETH) fails. SETH is a long-standing hypothesis in the study of algorithmic complexity theory. It states that there is no algorithm that can solve the $k$-SAT problem with length $n$ in $O((2-\delta)^{n})$ time $\forall \delta>0$. To address this issue, there are several algorithms that have been proposed. The state-of-the-art solutions include an exact algorithm proposed by \cite{DBLP:journals/corr/abs-1204-5333} that runs in weak subquadratic time, as well as an almost-linear time approximation algorithm proposed by \cite{CHAN201872}. These algorithms provide efficient ways to compute the Fr\'{e}chet distance. In this project, we have implemented the $O(k^{2})$ solution using dynamic programming since the efficiency is not the main focus of studying this metric, we are more interested in whether it produces different results from the $L^2$-norm.

\begin{definition}[Discrete Fr\'{e}chet Distance]\enspace
    For all $\Psi_{\Delta,1},\Psi_{\Delta,2}\in\Phi_{\Delta}$, the coupling of $\Psi_{\Delta,1},\Psi_{\Delta,2}$ is defined as
    $$L(\Psi_{\Delta,1},\Psi_{\Delta,2}):=(\Psi_{\Delta,1}(a_{0}),\Psi_{\Delta,2}(a_{0})),(\Psi_{\Delta,1}(a_{1}),\Psi_{\Delta,2}(b_{1})),...,(\Psi_{\Delta,1}(a_{k-1}),\Psi_{\Delta,2}(a_{k-1}))$$
    and satisfying  $a_{k-1}=b_{k-1}=k-1$ with $a_{0}=b_{0}=0$ and $(a_{j+1}=a_{j}+1\land b_{j}=b_{j+1})\lor (b_{j+1}=b_{j}+1\land a_{j}=a_{j+1})\quad\forall j\in[0,k-2]$.
    Then for each coupling, its length $\norm{L}$ is defined as
    $\norm{L}=\max_{i\in[0,n]}\norm{\Psi_{\Delta,1}(a_{0})-\Psi_{\Delta,2}(a_{0})}_{2}$
    then the discrete Fr\'{e}chet distance $d_{F_{\Delta}}$ is defined as
    $d_{F_{\Delta}}(\Psi_{\Delta,1},\Psi_{\Delta,2})=\min\{\norm{L}\in\mathbb{R}|L\text{ is a coupling of $\Psi_{\Delta,1}$ and $\Psi_{\Delta,2}$}\}$.
\end{definition}

\subsubsection{Hausdorff Metric}
The last metric we have chosen is the Hausdorff metric, which is another important metric for comparing the similarity of two different trajectories or sets of points. Unlike other metrics, the Hausdorff metric does not consider the ordering of points in the set. It is worth noting that we are not the first to apply the Hausdorff metric for computing the LCS. In previous works \cite{TYLER2022101883,BLAZEVSKI201446}, they also incorporated the Hausdorff metric into their frameworks for computing the LCS. The Hausdorff metric can be naively computed in $O(k^{2})$ time complexity. However, similar to the previous metric, our main focus is not on the runtime of this metric but rather on its difference from the $L^2$-norm.

\begin{definition}[Hausdorff Metric]
    The one sided Hausdorff metric $d_{H}':\Phi\times\mathbb{R}^{n}\mapsto\mathbb{R}$ is defined as 
    $d_{H}'(\Psi,\psi)=\inf_{\tau\in[t_{0},t_{0}+T]}\norm{\Psi(\tau)-\psi}_{2}$
    where $\norm{\cdot}_{2}$ is the $l^{2}$ norm induced by the space $\mathbb{R}^{n}$.\\
    Then the Hausdorff metric is defined as
    $$d_{H}(\Psi_{1},\Psi_{2})=\max\left\{\sup_{t\in[t_{0},t_{0}+T]}d_{H}'(\Psi_{1},\Psi_{2}(t)),\sup_{t\in[t_{0},t_{0}+T]}d_{H}'(\Psi_{2},\Psi_{1}(t))\right\} \, .$$
    Noted that by simply changing the domain of $\sup$ and $\inf$ correspondingly to $[0,k-1]$, the above definition works for $k$-discrete trajectory as well.
\end{definition}

%%%%%%%%%%%%%%%%%%%%%%%%%%
\section{Numerical Examples}
\label{Sec:NumericalExamples}

This paper specifically focuses on two-dimensional flows. Extending the algorithm to three-dimensional examples is relatively straightforward, but we will not cover that in this paper. The trajectories in all the test cases have been pre-computed using the fourth-order Runge-Kutta method, with the underlying velocity known analytically.  For the implementation of the main computation algorithm, we have used the programming language \textsf{C++}. Additionally, for visualization purposes, we have utilized the \textsf{contourf} function in \textsf{MATLAB} for simplicity. In all of the experiments, unless otherwise specified, we used 100 sampling points for the trajectories.

%%%%%%%%%%%%%
\subsection{Circular Flow}
\label{SubSec:Circular}

\begin{figure}[h!]
\centering
(a)\includegraphics[trim=50 20 50 20, clip, width=0.45\linewidth]{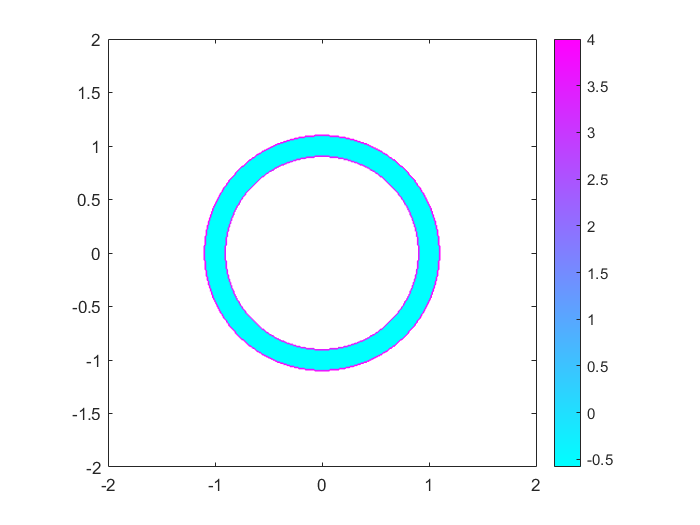}
(b)\includegraphics[trim=50 20 50 20, clip, width=0.45\linewidth]{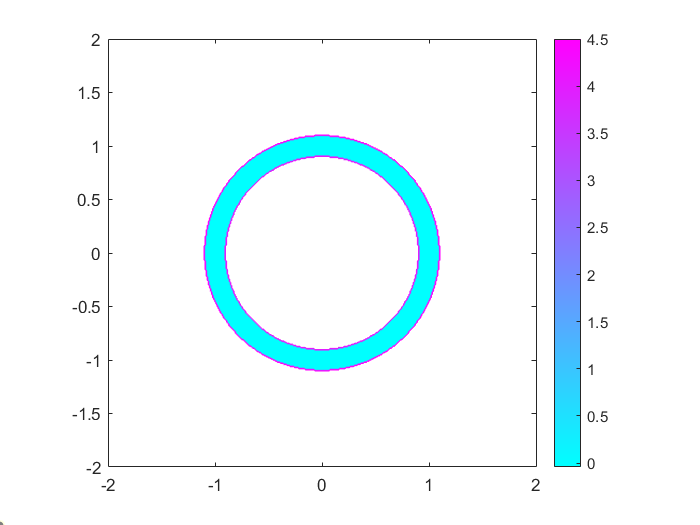}
(c)\includegraphics[trim=50 20 50 20, clip, width=0.45\linewidth]{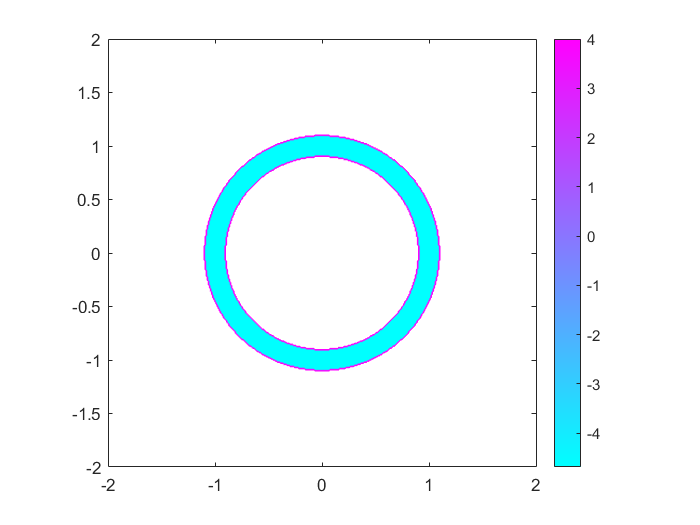}
(d)\includegraphics[trim=50 20 50 20, clip, width=0.45\linewidth]{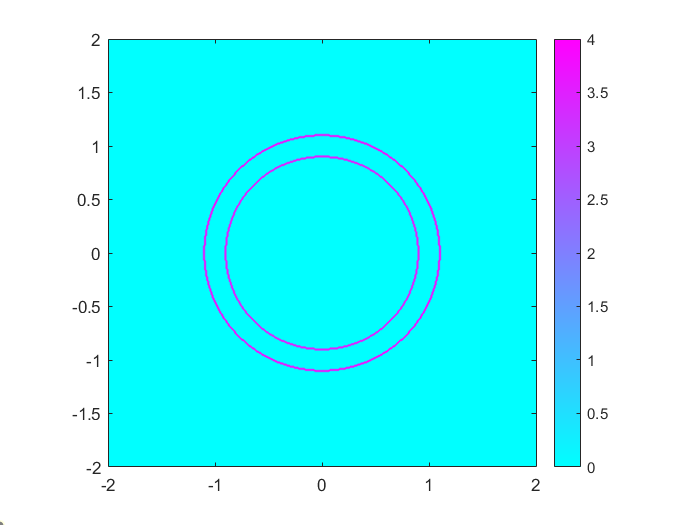}
\caption{(Section \ref{SubSec:Circular}) The computed LTVE using (a) the normalized 2-norm, (b) the Fr\'{e}chet distance, and (c) the Hausdorff metric. (d) The corresponding FTLE field.}%\reminder{axis equal}} done
\label{Fig:Circular}
\end{figure}

We begin with a simple toy example that intuitively demonstrates a structure in the field. The field is designed with a small ring-shaped flow on a 2D plane, centered at the origin. At the boundary of the ring, a notable difference in velocity creates a distinct structure. The velocity field is given by:
$$\frac{d\vx}{dt}=\begin{cases}
    (y,-x)^{T}\quad&(1-\varepsilon)^{2}\leq(x+y)^{2}\leq(1+\varepsilon)^{2}\\
    (0,0)^{T}\quad&\text{otherwise}
\end{cases}$$
where $\varepsilon$ is some small constant, in our experiment it is taken to be $0.1$, we run the experiment with domain $[-2,2]\times [-2,2]$ and $\delta={4}/{399}$ and time range $[0,1]$.

As we examine the visualizations in Figure \ref{Fig:Circular} (a-d), we can observe that all the visualizations successfully highlight the structure we expected, which corresponds to the boundaries of the ring. It is important to note that in Figure \ref{Fig:Circular} (a-c), there is a large blank area that may initially appear as a visualization issue. However, this blank area corresponds to regions where the field remains static. Consequently, the LTVE at those points is undefined because the exact LTV is zero. The same theoretical result holds for the FTLE case, where the LTVE is expected to be undefined in static regions. However, in practice, the computation of numerical gradients involved in the FTLE calculation may introduce computational errors, preventing exact zero values from being obtained. This discrepancy explains the visual differences between the LTVE and FTLE visualizations. It is worth noting that this issue only arises when the field is static and does not affect the visualization of LCS.

%%%%%%%%%%%%%
\subsection{Standing Wave}
\label{SubSec:StandingWave}

\begin{figure}[h!]
\centering
(a)\includegraphics[trim=40 0 40 0, clip, width=0.45\linewidth]{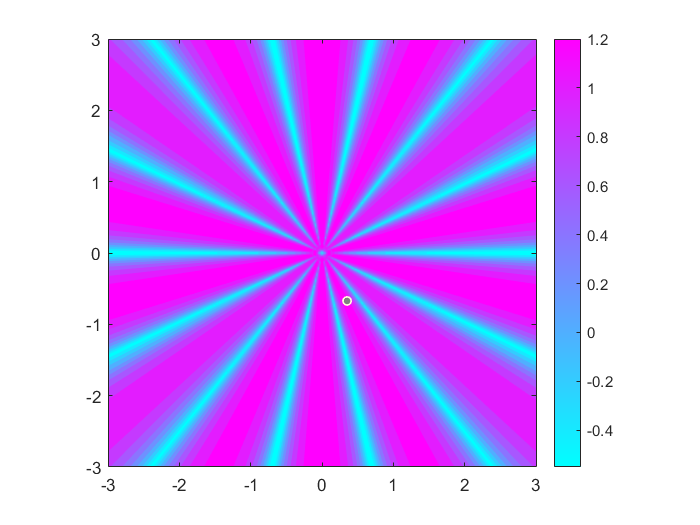}
(b)\includegraphics[trim=40 0 40 0, clip, width=0.45\linewidth]{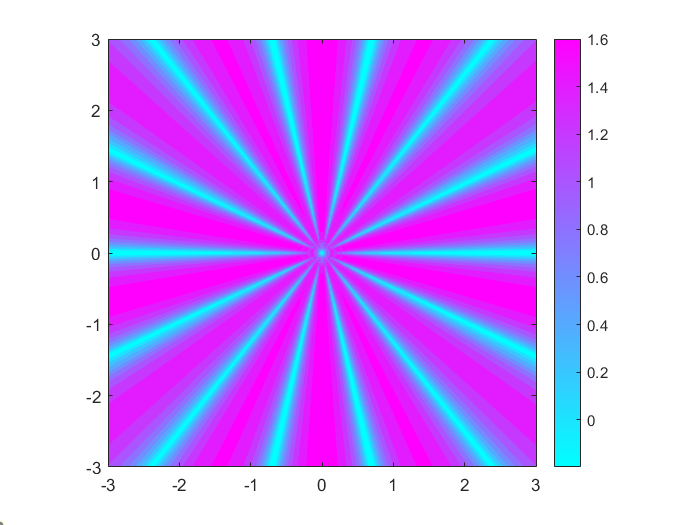}
(c)\includegraphics[trim=40 0 40 0, clip, width=0.45\linewidth]{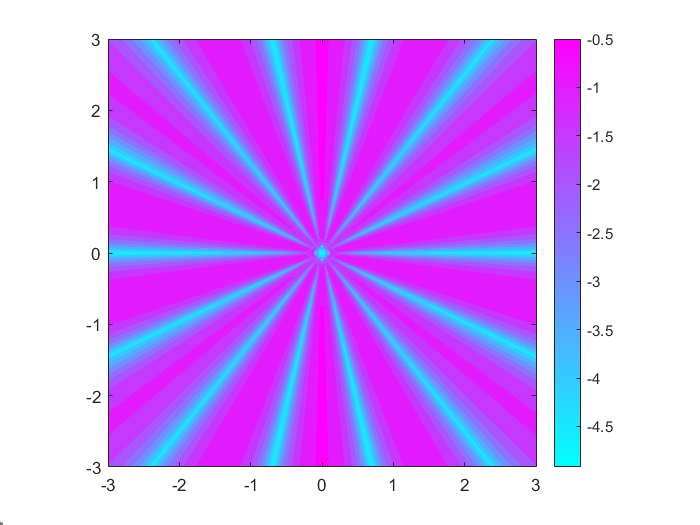}
(d)\includegraphics[trim=40 0 40 0, clip, width=0.45\linewidth]{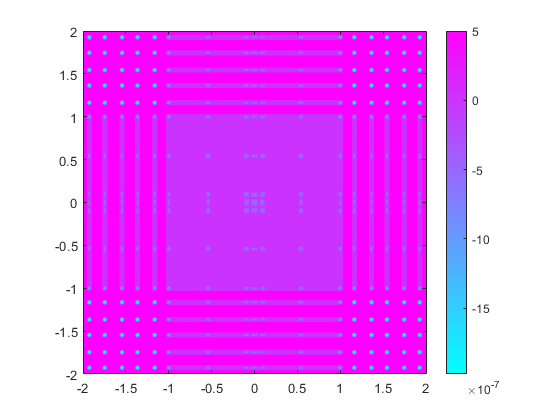} 
\caption{(Section \ref{SubSec:StandingWave}) The computed LTVE using (a) the normalized 2-norm, (b) the Fr\'{e}chet distance, and (c) the Hausdorff metric. (d) The corresponding FTLE field.} %\reminder{axis equal}} done
\label{Fig:StandingWave}
\end{figure}

In this example, we consider a simple flow where we can analytically determine the trajectory of each individual particle. The FTLE only considers the final location of the particles and disregards any information from the intermediate steps. As a result, the FTLE field heavily depends on the chosen final visualization time. To demonstrate the superiority of our proposed LTVE, we introduce a velocity field given by:
\[
\frac{d\mathbf{x}}{dt} = \left(\frac{\omega\varepsilon\cos(\omega t)}{1+\varepsilon\sin(\omega t)}\right) \mathbf{x}
\]
Here, $\varepsilon = \varepsilon(\mathbf{x}_0)$, where $\mathbf{x}_0$ represents the initial location of the particle at $t=0$. We model $\varepsilon$ as $\varepsilon = \varepsilon_{\max}\cos(k\theta)$, where $\theta = \tan^{-1}\left(\frac{y}{x}\right)$ is the polar angle representation of $\mathbf{x}_0$, and $\varepsilon_{\max} = 0.8$ controls the magnitude of the oscillations. The trajectory of each particle is given by $\mathbf{x}(t) = (1+\varepsilon\sin(\omega t))\mathbf{x}_0$. By considering this specific flow, we can demonstrate that our proposed LTVE provides a better understanding of the underlying flow compared to the FTLE. Unlike the FTLE, which only considers the final location of the particles, the LTVE takes into account the information from all intermediate steps. This allows for a more comprehensive analysis of the flow dynamics.

This specific flow exhibits two important properties. First, for $\theta = \frac{(2n-1)\pi}{2k}$, where $n$ and $k$ are integers, we observe that $\varepsilon = 0$, resulting in stationary particles for all time. Second, for all other particles, we observe oscillatory behavior with a period of $2\pi/\omega$. These particles return to their original locations after this period, leading to an FTLE value of 0 throughout the computational domain. In this numerical example, we choose $\omega = 8\pi$ and $k = 7$. We compute the trajectories for a time period of $[0,1]$. The computational domain is set to $[-3,3]^2$, and we use a grid spacing of $\delta = {6}/{299}$ in the definition of the LTVE. Figure \ref{Fig:StandingWave}(d) shows the FTLE field, which evaluates to 0 with some noise on the order of $10^{-7}$. However, in Figures \ref{Fig:StandingWave}(a-c), the structure of the flow is clearly visualized, with LTVE values ranging from $10^{-1}$ to $10^{-2}$. This significant difference in magnitudes demonstrates the distinguishability of our proposed quantity from the classical FTLE result.

\subsection{Evaluation with 3-Trajectories}
\label{SubSec:3trajEval}

\begin{figure}[h!]
    \centering
    (a)\includegraphics[trim=40 0 40 0, clip, width=0.45\linewidth]{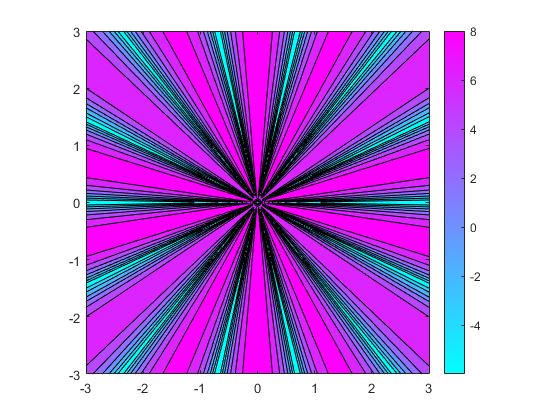}
    (b)\includegraphics[trim=40 0 40 0, clip, width=0.45\linewidth]{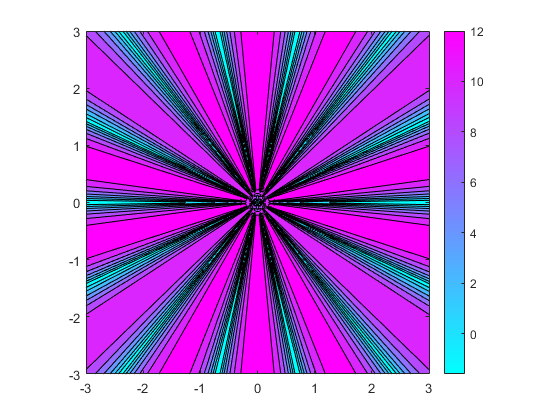}
    (c)\includegraphics[trim=40 0 40 0, clip, width=0.45\linewidth]{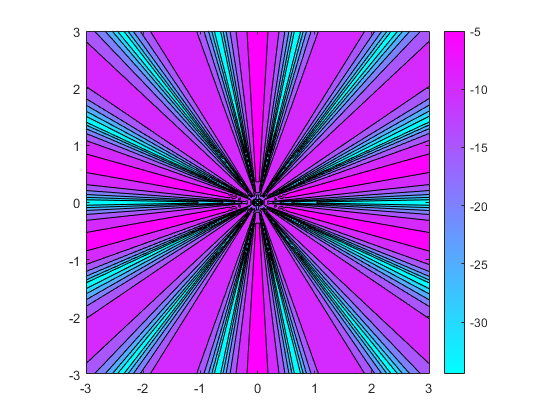}
    (d)\includegraphics[trim=40 0 40 0, clip, width=0.45\linewidth]{StandingWaveFTLE.png} 
    \caption{(Section \ref{SubSec:3trajEval}) The computed LTVE using (a) the normalized 2-norm, (b) the Fr\'{e}chet distance, and (c) the Hausdorff metric. (d) The corresponding FTLE field.} %\reminder{axis equal}} done
    \label{Fig:StandingWave3traj}
\end{figure}

In this section, we recompute the LTVE for the Standing Wave example introduced in Section \ref{SubSec:StandingWave} with a different setting. Specifically, we use only 3 sample points for the trajectories and choose a final time of $T=0.125$, while keeping all other parameters unchanged.

As shown in Figure \ref{Fig:StandingWave3traj}, the results obtained from evaluating the LTVE with only three sampling points in the trajectories align with the findings reported in Section \ref{SubSec:StandingWave}. The LTVE values obtained exhibit a substantial difference, spanning several orders of magnitude, compared to the FTLE values. This stark contrast demonstrates that even with the addition of just one extra sampling point to the trajectories, our LTVE quantity is capable of identifying structures that the FTLE fails to capture.

%%%%%%%%%%%%%
\subsection{Double Gyre Flow}
\label{SubSec:DoubleGyre}

\begin{figure}[h!]
\centering
(a)\includegraphics[trim=30 80 30 80, clip, width=0.45\linewidth]{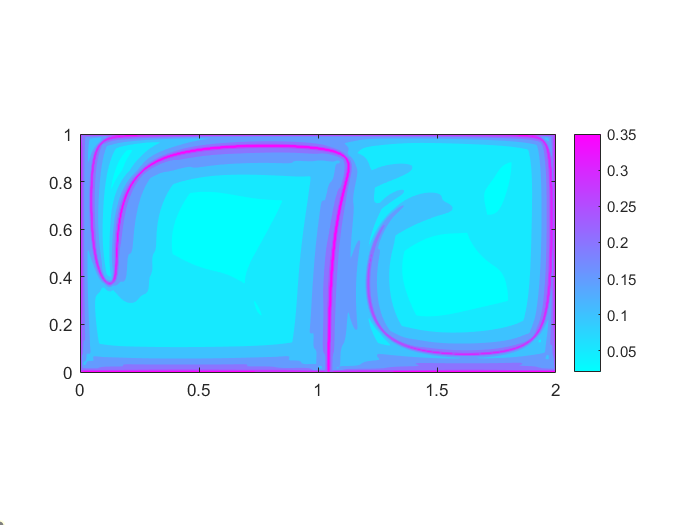}
(b)\includegraphics[trim=30 80 30 80, clip, width=0.45\linewidth]{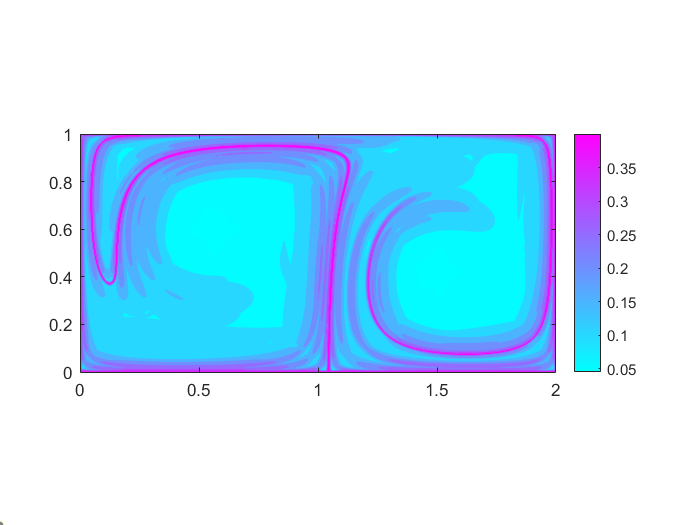} \\
(c)\includegraphics[trim=30 80 30 80, clip, width=0.45\linewidth]{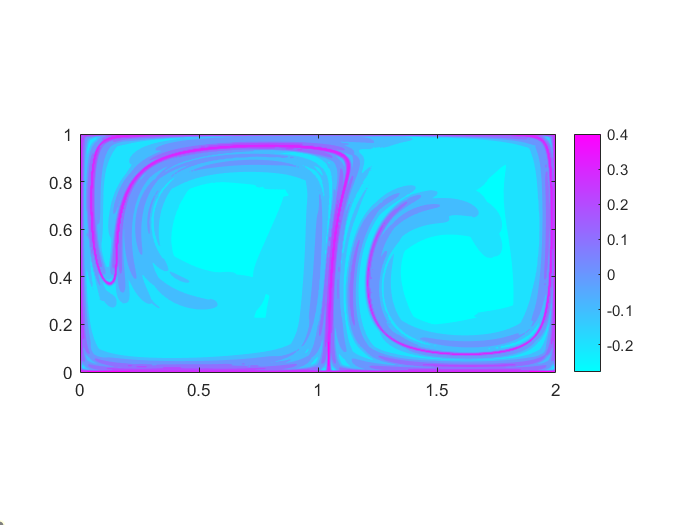}
(d)\includegraphics[trim=30 80 30 80, clip, width=0.45\linewidth]{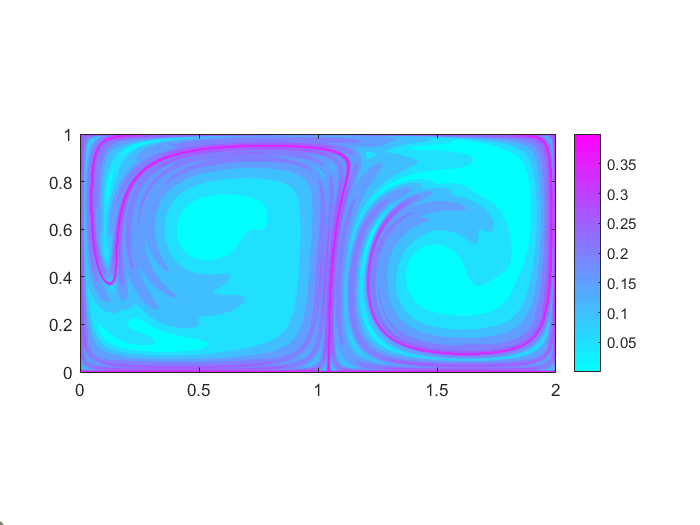}
\caption{(Section \ref{SubSec:DoubleGyre}) The computed LTVE using (a) the normalized 2-norm, (b) the Fr\'{e}chet distance, and (c) the Hausdorff metric. (d) The corresponding FTLE field.}
\label{Fig:Double}
\end{figure}

\begin{figure}[h!]
\centering
(a)\includegraphics[trim=30 60 30 50, clip, width=0.45\linewidth]{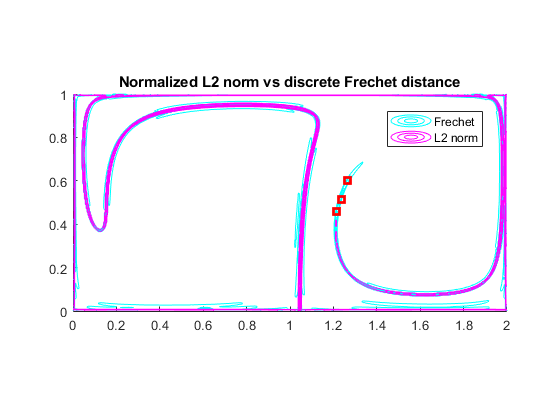}
(b)\includegraphics[trim=30 60 30 50, clip, width=0.45\linewidth]{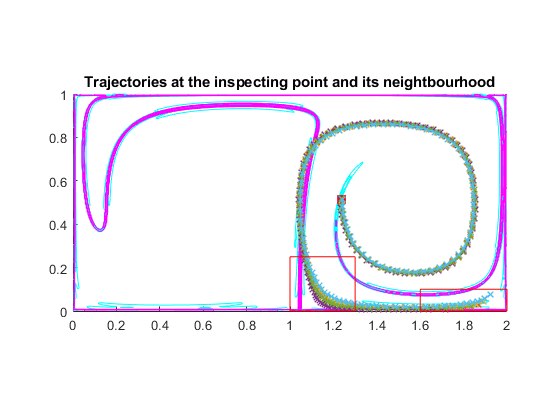}
(c)\includegraphics[trim=20 0 20 0, clip, width=0.45\linewidth]{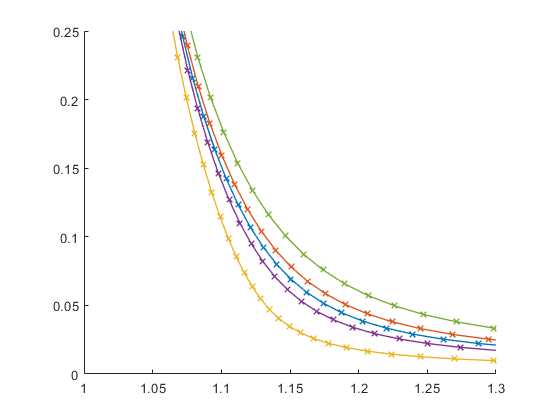}
(d)\includegraphics[trim=30 90 30 80, clip, width=0.45\linewidth]{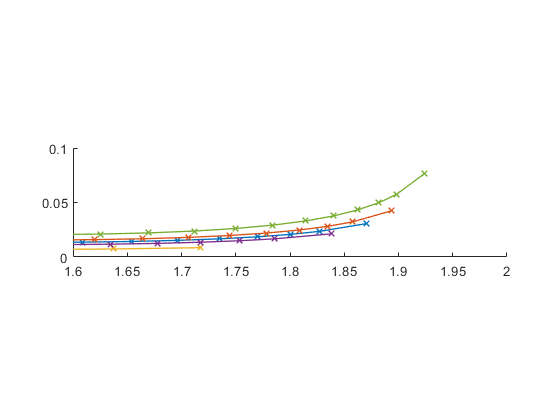}
\caption{(Section \ref{SubSec:DoubleGyre}) The computed LTV using (a) the normalized 2-norm and the Fr\'{e}chet distance. (b) We also plot the trajectory of several particles. (c) The zoom-in to the red boxed region on left and (d) the red boxed region on right in (b).}
\label{fig:DoubleTrajectory}
\end{figure}

\begin{figure}[h!]
\centering
\includegraphics[clip, width=0.45\linewidth]{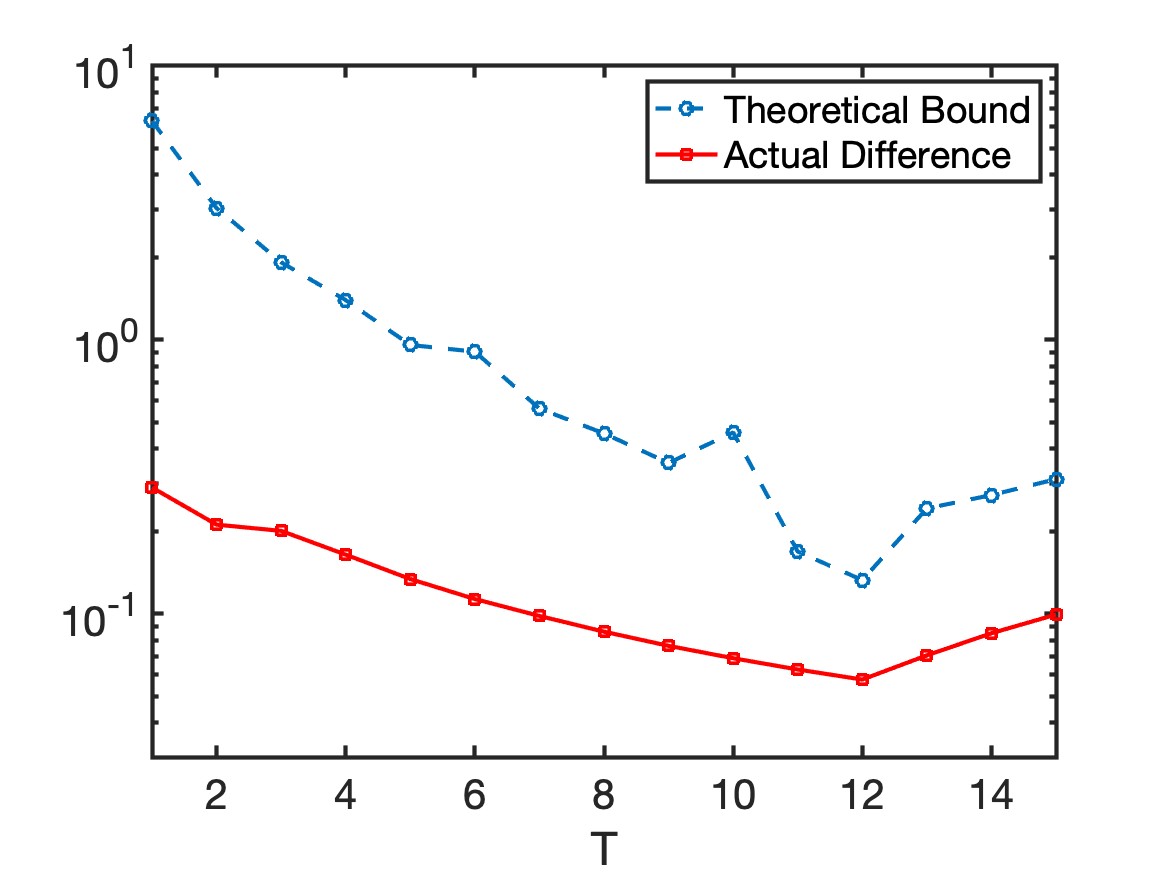}
\caption{(Section \ref{SubSec:DoubleGyre}) The actual difference between the LTVE and the FTLE at different time. The estimate from equation (\ref{Eqn:LTVEvsFTLE}) is shown using the dashed line.}
\label{Fig:LTVEvsFTLE} 
\end{figure}

In this section, we proceed with a common example used for computing the LCS of dynamic systems, known as the double gyre flow. We use this example to illustrate how our quantity is able to accurately capture the LCS. As a convention, we choose the domain to be $[0,2]\times[0,1]$ with a spacing of $\delta={1}/{249}$. The time range is taken to be $[0,15]$ with 100 sampling points along the trajectory. The velocity field is given by:
$$
\frac{d\mathbf{x}}{dt}=\begin{pmatrix}
    -\pi A\sin(\pi(ax^{2}+bx))\cos(\pi y)\\
    \pi A(2ax+b)\cos(\pi(ax^{2}+bx))\sin(\pi y)
\end{pmatrix}
$$
where $A=0.1$, $a=0.1\sin\left({\pi t}/{5}\right)$, and $b=1-2a$. Below, we provide a visualization of our quantity compared with FTLE. We then analyze the trajectory in the part that demonstrates differences between the two metrics in the visualization. We also provide the results of an actual runtime experiment, where we vary the mesh size and trajectory length. Finally, we compare the effects of using different relaxation schemes.

The results of the computation are visualized in Figures \ref{Fig:Double}(a-c). Comparing these results to the FTLE field plotted in Figure \ref{Fig:Double}(d), we can see that our proposed quantity forms a sharp and clean ridge that agrees with the FTLE ridge. However, there is a slight difference in the tail of the structure between the results obtained from the three metrics. To provide a better visualization, we plotted the LTVE values computed under different metrics and overlapped them in Figure \ref{fig:DoubleTrajectory}(a). Specifically, we selected a point located at the tail and plotted the trajectories at that point and in its neighborhood for comparison, as shown in Figure \ref{fig:DoubleTrajectory}(b). Upon closer inspection, we can observe that the trajectory at the particular point does not exhibit a strong expelling or attracting feature. However, there are slight variations in the trajectories, as seen in the region boxed in red and visualized in Figures \ref{fig:DoubleTrajectory}(c-d). We notice that although the result evaluated from the Fr\'{e}chet distance appears to be more consistent with the tail part of the FTLE, Figures \ref{fig:DoubleTrajectory}(c-d) show that the perturbation of the trajectories is not as significant as expected for being a LCS. This indicates that even along the FTLE ridge, the effect of perturbing the trajectories can still vary. The structure captured at a certain point may be much weaker, and our proposed quantity can capture such features by using a different metric.

To validate the relationship between the LTVE and FTLE, we computed the actual differences at different times and compared them with the estimations derived from equation (\ref{Eqn:LTVEvsFTLE}). The resulting differences are illustrated in Figure \ref{Fig:LTVEvsFTLE}. In the figure, each blue point represents the theoretical bound estimated from equation (\ref{Eqn:LTVEvsFTLE}), while each red point represents the actual difference evaluated when fixing the time span to $[0,T]$. We observe that the red points, representing the actual differences, consistently fall below the blue points, which correspond to the theoretical estimations. This alignment between the experimental results and our derived equation (\ref{Eqn:LTVEvsFTLE}) confirms the validity of our theoretical derivation.

\begin{figure}[h!]
    \centering
    (a)\includegraphics[trim=30 80 50 80, clip, width=0.45\linewidth]{LTVEDoubleGyreR2k.png}
    (b)\includegraphics[trim=30 80 50 80, clip, width=0.45\linewidth]{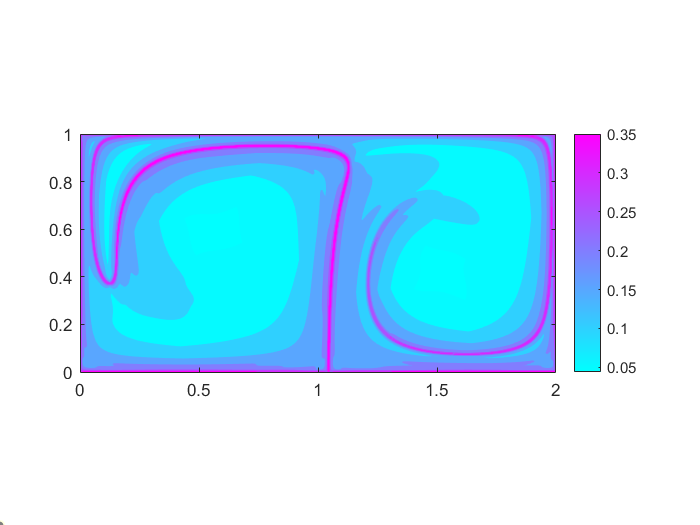}
    (c)\includegraphics[trim=30 80 50 80, clip, width=0.45\linewidth]{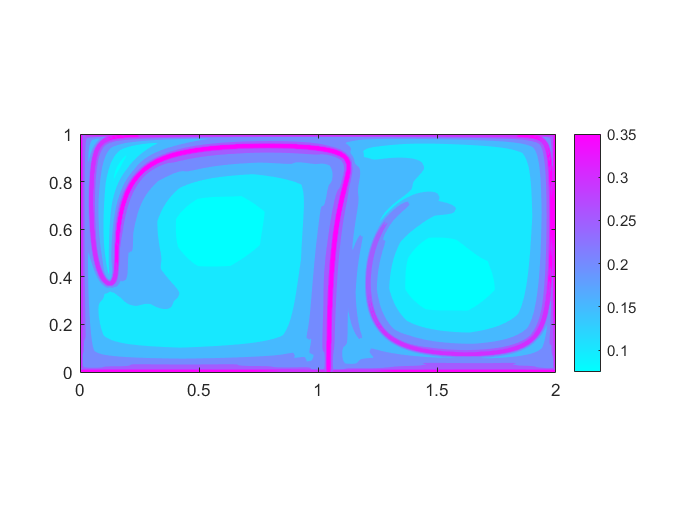}
    (d)\includegraphics[trim=30 80 50 80, clip, width=0.45\linewidth]{DoubleGyreFTLE.png}
    \caption{(Section \ref{SubSec:Relax}) The computed LTVE using the normalized 2-norm with (a) first order relaxation scheme (b) second order relaxation scheme and (c) third order relaxation scheme, (d) is the FTLE field for comparison.}
    \label{fig:Relax}
\end{figure}

\subsection{Relaxation Schemes}
\label{SubSec:Relax}

Here, we present a comparative study on the effect of different relaxation schemes on the visualization results of the double gyre flow. From Figure \ref{fig:Relax}, we can observe that there is not a significant difference between (a) and (b). However, in (c), our quantity highlights a thicker line of structure. Furthermore, the structure that we focused on in Figure \ref{fig:DoubleTrajectory}(a) is now consistent with the result obtained using the Fr\'{e}chet distance. This result aligns with our claim in Section \ref{SubSec:DoubleGyre} that the structure appearing at the tail is weaker and more sensitive to the initial condition. Therefore, by adopting a higher-order relaxation scheme, it is more likely to capture the structure that only becomes significant along certain directions.

\begin{table}[h!]
  \centering
  (a)
  \begin{tabular}{|c|c|c|c|}
    \cline{2-4}
    \multicolumn{1}{c|}{}&\multicolumn{3}{c|}{Metric}\\
    \hline
    mesh size $\delta$&Normalized $L^{2}$-norm&Fr\'{e}chet distance&Hausdorff metric\\
    \hline
    ${1}/{49}$&$0.209$&$21$&$47$\\
    \hline
    ${1}/{99}$&$0.835$&$85$&$193$\\
    \hline
    ${1}/{149}$&$1.878$&$192$&$436$\\
    \hline
    ${1}/{199}$&$3.334$&$342$&$776$\\
    \hline
    ${1}/{249}$&$5.225$&$535$&$1231$\\\hline
  \end{tabular} \\ \vspace{0.5cm}
  (b)
     \begin{tabular}{|c|c|c|c|}
        \cline{2-4}
        \multicolumn{1}{c|}{}&\multicolumn{3}{c|}{Metric}\\
        \hline
        trajectory length $k$&Normalized $L^{2}$-norm&Fr\'{e}chet distance&Hausdorff metric\\
        \hline
        $5$&$0.175$&$0.355$&$0.461$\\\hline
        $10$&$0.196$&$0.912$&$1.392$\\\hline
        $15$&$0.226$&$1.831$&$2.857$\\\hline
        $20$&$0.250$&$3.051$&$4.893$\\\hline
        $25$&$0.277$&$4.565$&$7.415$\\\hline
        $30$&$0.304$&$6.454$&$10.506$\\\hline
        $35$&$0.330$&$8.578$&$14.166$\\\hline
        $40$&$0.370$&$11.459$&$18.520$\\\hline
        $45$&$0.383$&$14.208$&$23.864$\\\hline
        $50$&$0.427$&$17.457$&$28.483$\\\hline
        $55$&$0.463$&$21.980$&$34.331$\\\hline
        $60$&$0.463$&$24.122$&$41.441$\\\hline
    \end{tabular}
  \caption{(Section \ref{SubSec:RunTime}) (a) The CPU run time in seconds for different mesh sizes over different metrics. (b) The CPU run time in seconds for different lengths of trajectory over different metrics.}
  \label{tab:TimeVsMeshLength}
\end{table}

\begin{figure}[h!]
\centering
(a)\includegraphics[clip, width=0.45\linewidth]{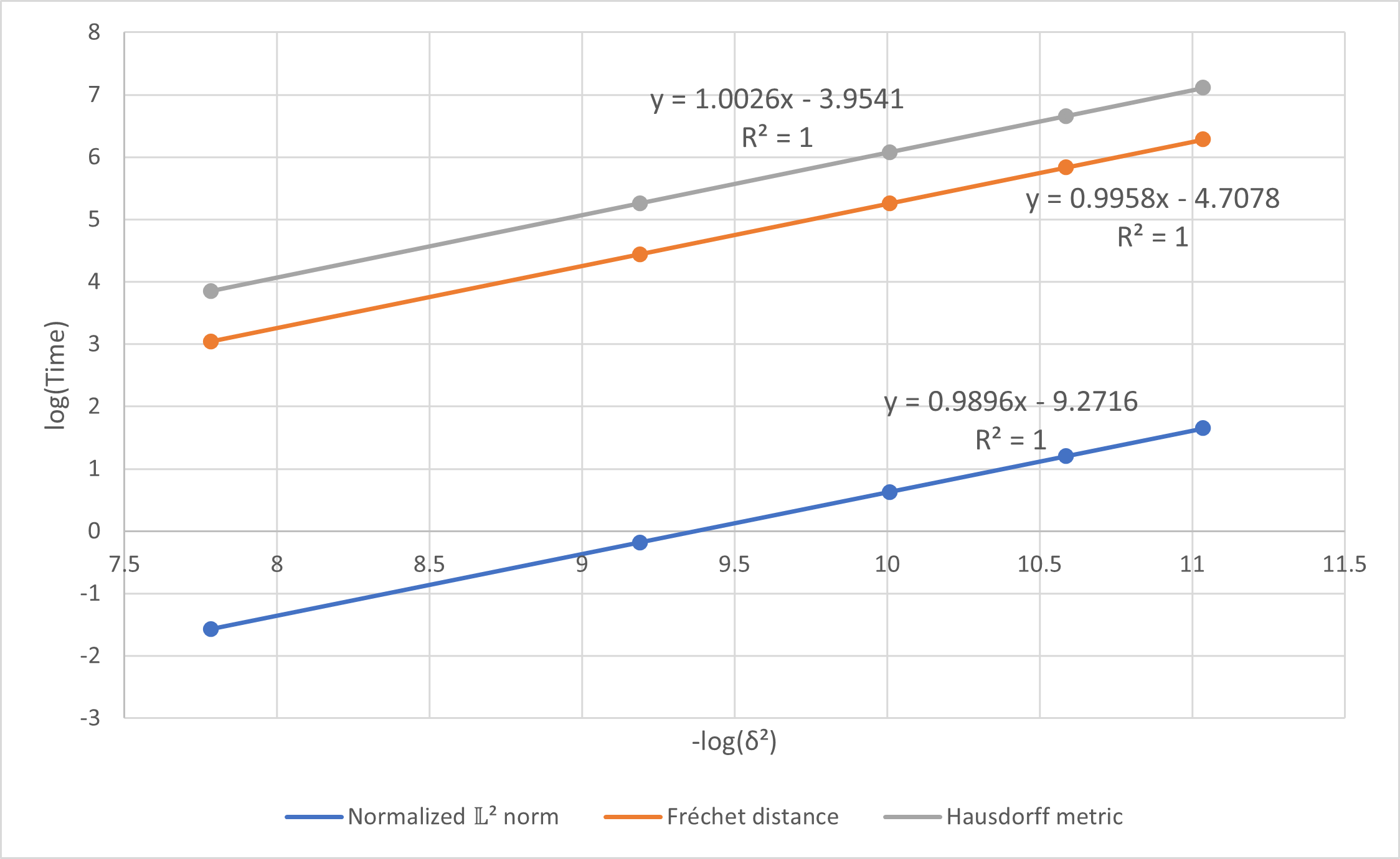}
(b)\includegraphics[clip, width=0.45\linewidth]{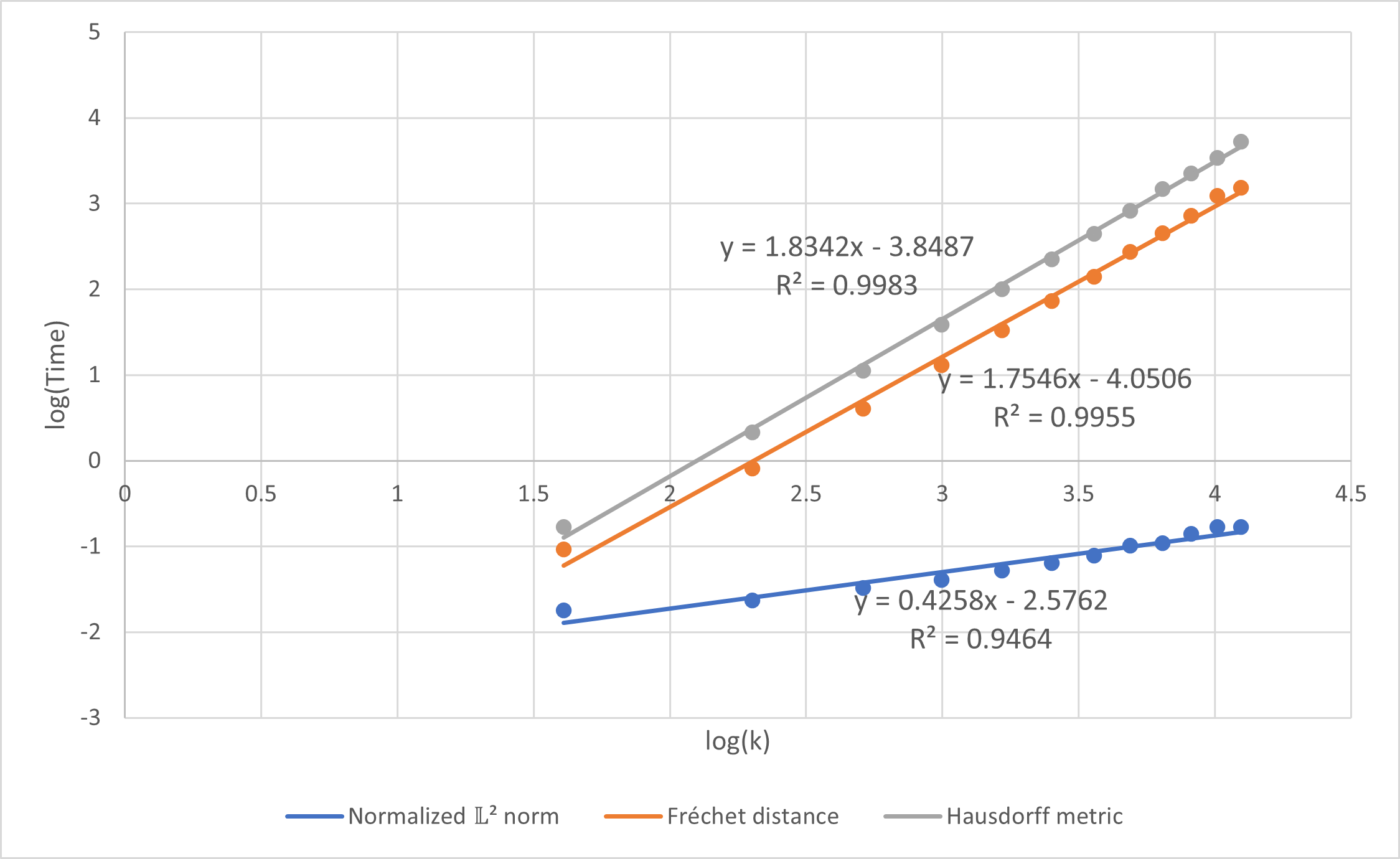}
\caption{(Section \ref{SubSec:RunTime}) (a) The log time versus $\log \delta^{-2}$, and (b) The log time versus the log of the trajectory length.}
\label{Fig:RunTime} 
\end{figure}

\subsection{Runtime Performance}
\label{SubSec:RunTime}
In this section, we provide the actual CPU run time results while varying the trajectory length and mesh size. All the results were obtained using a laptop computer with a 4-core 11th Gen Intel i5 CPU and 16GB of RAM. It is important to note that our code is not fully optimized, and the main purpose of reporting these runtimes is to showcase the complexity under different metrics. To conduct our measurements, we utilized the double gyre flow from Section \ref{SubSec:DoubleGyre} as the underlying field. This choice was motivated by its complexity, which allows us to present a comprehensive example for visualizing LCS.

\paragraph{Mesh Size.} 
First, we conducted performance measurements using five selected mesh sizes, denoted as $\delta$, while keeping other parameters the same as in our previous experiments. The recorded times are presented in Table \ref{tab:TimeVsMeshLength}(a), and we plotted the logarithm of time against $\log {\delta^{-2}}$ in Figure \ref{Fig:RunTime}(a). Notably, the figure exhibits a clear linear relationship with a slope of one, which aligns with our complexity analysis. Furthermore, based on our complexity analysis, the $y$-intercept of the plot should depend on the domain size and the complexity of the metric used to compare trajectories. In this case, since the domain size is fixed, the slope is primarily determined by the metric. This observation is supported by the data, as the plot of the normalized $L^{2}$ norm displays a significantly smaller $y$-intercept compared to the other two metrics, indicating its linear complexity. Regarding the Fr\'{e}chet distance and Hausdorff metric, the computation of the Fr\'{e}chet distance involves dynamic programming, resulting in a relatively lower actual runtime compared to the Hausdorff metric, which requires comparing all pairs of trajectory points to compute the maximum distance. This explains why the Fr\'{e}chet distance has a lower runtime compared to the Hausdorff metric.

\paragraph{Trajectory Length.} 
We then measured the performance for five different trajectory lengths, keeping the other parameters constant. The results are presented in Table \ref{tab:TimeVsMeshLength}(b). Once again, the normalized $L^{2}$-norm outperformed the other two metrics due to its linear complexity. The Fr\'{e}chet distance metric performed slightly better than the Hausdorff metric, consistent with our previous analysis. Our complexity analysis suggests that the total runtime should be linear with the complexity of the metric, which is confirmed by the plot in Figure \ref{Fig:RunTime}(b). The slope of the plot is upper bounded by the actual complexity of the metric. The slight difference in runtime between the metrics may be due to hidden compiler optimizations that are hard to control. Nevertheless, the complexity analysis provides a valid upper bound for the runtime.

%%%%%%%%%%%%%
\subsection{Field with Artificial FTLE Ridge}
\label{SubSec:ArtificialRidge}

\begin{figure}[h!]
\centering
(a)\includegraphics[trim=60 20 50 0, clip, width=0.45\linewidth]{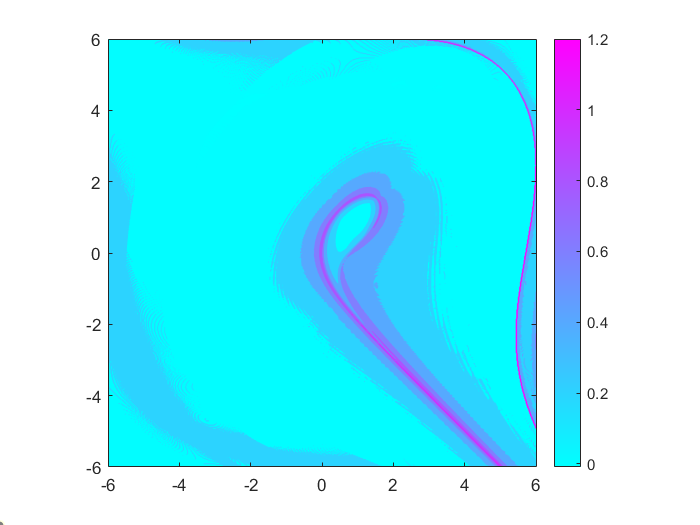}
(b)\includegraphics[trim=60 20 50 0, clip, width=0.45\linewidth]{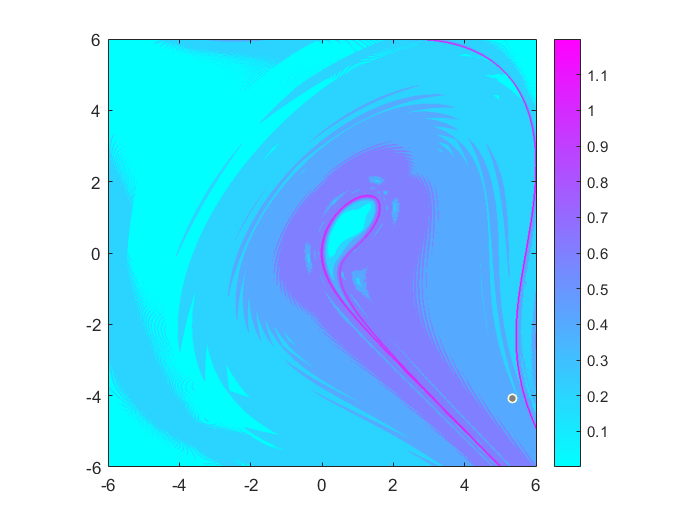}
(c)\includegraphics[trim=60 20 50 0, clip, width=0.45\linewidth]{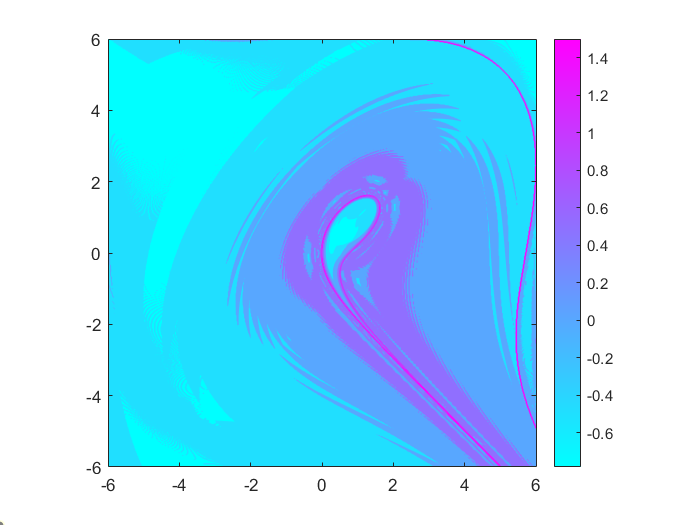}
(d)\includegraphics[trim=60 20 50 0, clip, width=0.45\linewidth]{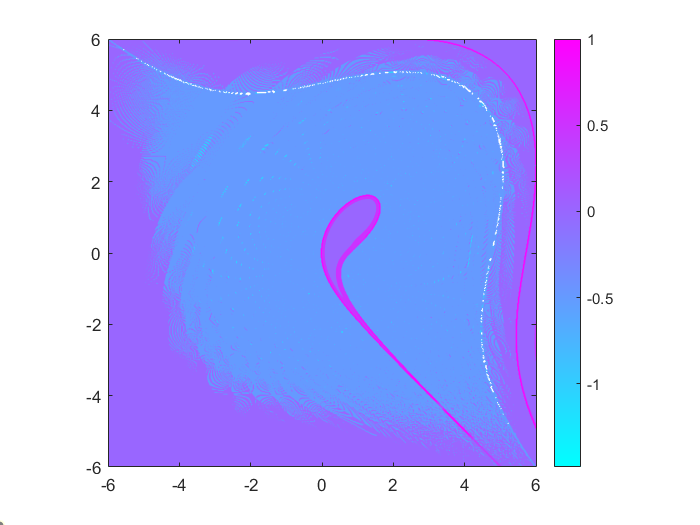}
\caption{(Section \ref{SubSec:ArtificialRidge}) The computed LTVE using (a) the normalized 2-norm, (b) the Fr\'{e}chet distance, and (c) the Hausdorff metric. (d) The corresponding FTLE field. }%\reminder{axis equal}} done
\label{Fig:ArtificialLCS}
\end{figure}

In \cite{leu11}, an Eulerian approach was proposed for computing the FTLE field, which highlighted the issue of artificial ridges arising from the boundary condition upon solving the trajectories. To illustrate this issue, a specific example was presented, where the velocity field is given by:
$$\frac{d\vx}{dt}=\begin{pmatrix}
x-y^{2}\
-y+x^{2}
\end{pmatrix}$$
For this example, the computational domain chosen was $[-6,6]\times [-6,6]$ with a spatial resolution of $\delta={12}/{599}$. The time range was set to $[0,5]$, with 100 sampling points for each trajectory.

As depicted in Figure \ref{Fig:ArtificialLCS}(a-c), our proposed method yields results that closely resemble those obtained using the FTLE field. However, it is worth noting that our method generally provides a cleaner visualization due to the sharp values at the structural points. It is important to acknowledge that this example contains an artificial LCS that does not exist in its natural definition. To address this issue, various techniques can be employed, such as the velocity extension algorithm proposed in \cite{doi:10.1063/1.3276061} or exploring an Eulerian approach for our quantity as demonstrated in \cite{leu11}, which may help mitigate the presence of artificial ridges.

%%%%%%%%%%%%%%%%%%%%

\subsection{Ocean Surface Current Analyses Real-time (OSCAR)}
\label{SubSec:OSCAR}

\begin{figure}[h!]
    \centering
    (a)\includegraphics[trim=30 80 30 80, clip, width=0.45\linewidth]{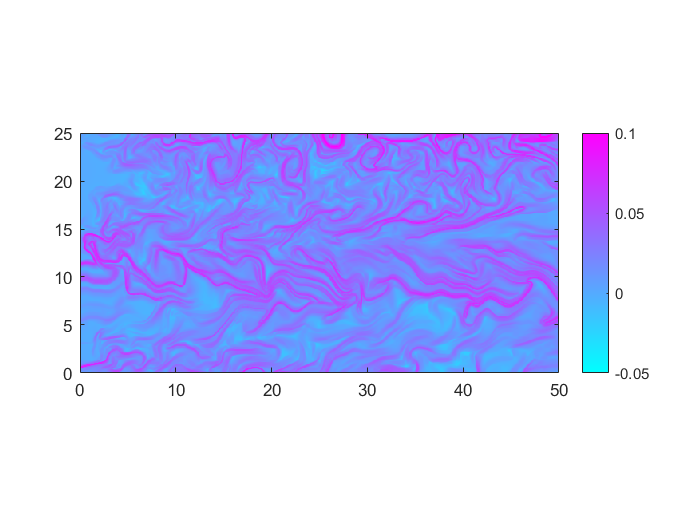}
    (b)\includegraphics[trim=30 80 30 80, clip, width=0.45\linewidth]{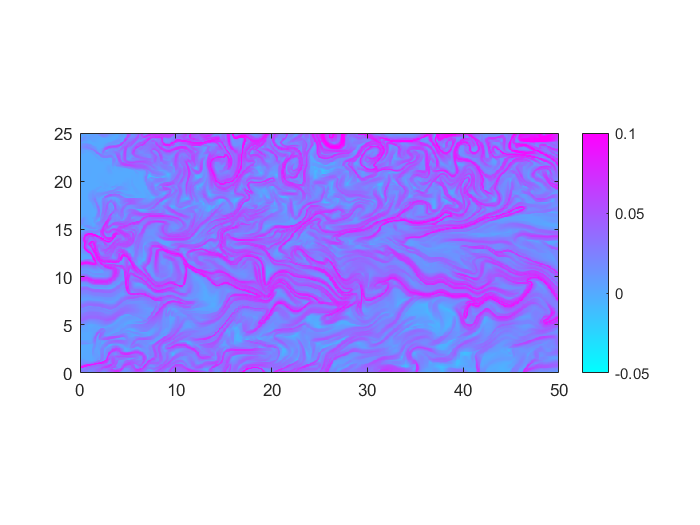} \\ 
    (c)\includegraphics[trim=30 80 30 80, clip, width=0.45\linewidth]{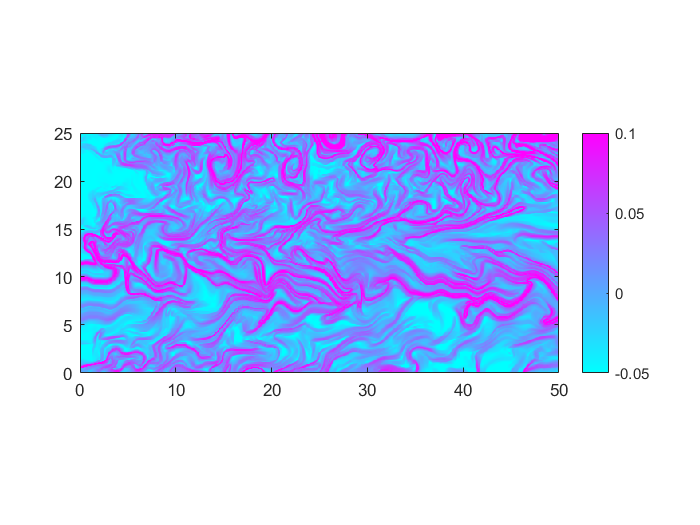}
    (d)\includegraphics[trim=30 80 30 80, clip, width=0.45\linewidth]{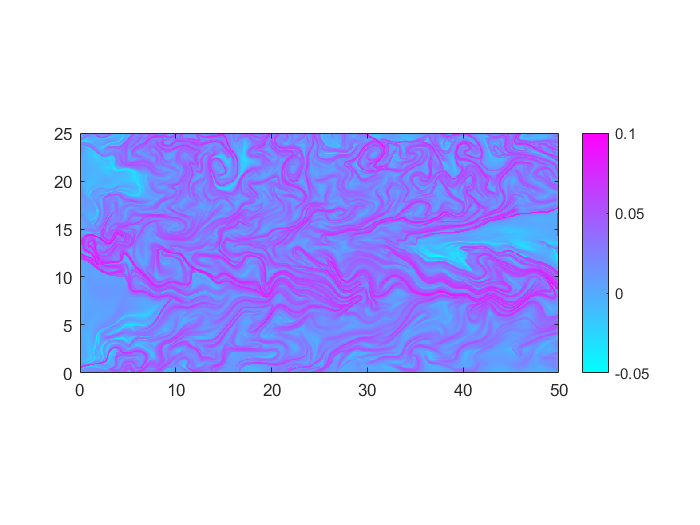}
    \caption{(Section \ref{SubSec:OSCAR}) The computed LTVE using (a) the normalized 2-norm, (b) the Fr\'{e}chet distance, and (c) the Hausdorff metric. (d) The corresponding FTLE field \cite{youwonleu17,youleu20}. }
    \label{Fig:OSCAR}
\end{figure}

To assess the efficacy and reliability of our proposed approach, we conducted experiments on a real-life dataset sourced from the Ocean Surface Current Analyses Real-time (OSCAR)\footnote{\url{http://www.esr.org/oscar_index.html}}. The OSCAR dataset captures ocean surface flow in a wide region spanning from $-80^{\circ}$ to $80^{\circ}$ latitude and $0^{\circ}$ to $360^{\circ}$ longitude. We acquired the OSCAR data from the JPL Physical Oceanography DAAC developed by ESR. In this particular numerical example, our focus centers on the vicinity of the Line Islands, encompassing the latitude range of $-17^{\circ}$ to $8^{\circ}$ and the longitude range of $180^{\circ}$ to $230^{\circ}$. The temporal resolution of the OSCAR data is approximately 5 days, and the spatial resolution is $1/3^{\circ}$ in each direction.

To enhance the visual representation of our solution, we performed velocity data interpolation, yielding a finer resolution of $0.25$ days in the temporal dimension and $1/12^{\circ}$ in each spatial dimension. Subsequently, we examined the ocean surface current during the first 50 days of the year 2014 to derive the forward FTLE field $\sigma_0^{50}(\mathbf{x})$. The computational results obtained from our proposed algorithm, utilizing various metrics, are presented in Figures \ref{Fig:OSCAR}(a-c). Notably, these results agree with the FTLE field computed in \cite{youwonleu17,youleu20}, as depicted in Figure \ref{Fig:OSCAR}(d), in general, but show differences in various regions. This provides insight into the flow structures that are worth further investigation.

%%%%%%%%%%

\section{Conclusions}
\label{Sec:Conclusion}

In this study, we propose a novel quantity based on the trajectory metric space, which bears resemblance to the FTLE field. We evaluate the efficacy of this new quantity with several examples, including one that reveals significant differences between our quantity and the FTLE field. This example serves as a motivation for the use of trajectory analysis in LCS visualization. We further test our quantity with other examples that demonstrate its ability to handle complex flows that arise in natural and certain flow conditions. In all cases, we observe that our quantity captures the general structure that the FTLE field captures, albeit with slight differences contingent upon the specific metric employed. We find that the differences between our quantity and the FTLE field are minor in the field we examined, but there may be fields where significant differences exist. However, the direct application of the Fr\'{e}chet distance or the Hausdorff metric is prohibitively expensive, and the time cost does not justify the marginal improvement in results.

In conclusion, our proposed method represents a generalization of the traditional FTLE approach with trajectories for visualizing Lagrangian coherent structures (LCS). We recommend utilizing the normalized $L^{2}$-norm as the primary metric, as it generally provides efficient and satisfactory results without significant differences compared to other metrics. Unless a specific application case presents a compelling reason to justify the use of a different metric, the normalized $L^{2}$-norm is sufficient for capturing the desired flow structures. To the best of our knowledge, no such application case has been identified thus far. However, we plan to further investigate this aspect in future studies to explore potential scenarios where alternative metrics may offer distinct advantages. By doing so, we aim to enhance our understanding of the implications and potential benefits of different metrics for visualizing LCS. 

Although we did not present any experiments of computing our quantity on higher dimensional domains in this paper, we believe that the framework for our quantity is general and not specifically dependent on the dimension, making it applicable to fields with higher dimensions.

In future research, we plan to extend our framework to dynamic cases and explore scenarios where a different metric could lead to a significantly different and useful result. Such results would be intriguing because they would also imply that the classical FTLE approach fails in these cases as it is a special case of applying the normalized $L^{2}$ norm. Additionally, we plan to investigate algorithms that can accelerate metric calculations since we found that metric complexity significantly affects practical runtime. To achieve dynamic cases, we need to identify a way of updating trajectories efficiently, enabling online updates of the quantity in a practical way.

\section*{Acknowledgments}
Leung is partially supported by the Hong Kong RGC (grant no. 16302223).

\bibliographystyle{plain}
\bibliography{bibtex,syleung}

\end{document}